\documentclass[11pt, a4paper]{amsart}

\usepackage{amsfonts}
\usepackage{amssymb}
\usepackage{amsmath, amsthm,color}
\usepackage{hyperref}
\usepackage{pdfsync}
\usepackage{float}
\usepackage{graphicx}
\usepackage{color}
\usepackage{bbm, dsfont}
\usepackage[margin=0.96in]{geometry}





\newcommand{\La}{\langle}
\newcommand{\Ra}{\rangle}

\def\supp{\operatorname{supp}}

\newcommand{\1}{{\bf 1}}



\newcommand{\eps}{\varepsilon}
\newcommand{\la}{\lambda}

\newcommand{\om}{\omega}

\newcommand{\cD}{{\mathcal D}}
\newcommand{\cA}{{\mathcal A}}
\newcommand{\cR}{{\mathcal R}}
\newcommand{\cS}{{\mathcal S}}

\newcommand{\cB}{{\mathcal B}}
\newcommand{\cI}{{\mathcal I}}


\newcommand{\bR}{{\mathbb R}}

\newtheorem{theorem}{Theorem}[section]

\newtheorem{lemma}[theorem]{Lemma}

\newtheorem{remark}[theorem]{Remark}

\numberwithin{equation}{section}


\begin{document}
\title[Martingale transform and its sharp lower estimate]{Martingale transform and Square function: some weak and restricted weak sharp weighted estimates}
\author[Paata Ivanisvili, \,\, Alexander Volberg]{Paata Ivanisvili,\,\, Alexander Volberg}
\thanks{ Volberg is partially supported by the NSF DMS-1600065.  This paper is  based upon work supported by the National Science Foundation under Grant No. DMS-1440140 while two of the authors were in residence at the Mathematical Sciences Research Institute in Berkeley, California, during the Spring 2017 semester. }
\address{Department of Mathematics, Princeton University, Princeton, NJ, USA}
\address{Department of Mathematics,  UC Irvine, Irvine, CA, USA}
\email{ivanishvili.paata@gmail.com \textrm{(P.\ Ivanisvili)}}
\address{Department of Mathematics, Michigan State University, East Lansing, MI 48823, USA}
\email{volberg@math.msu.edu \textrm{(A.\ Volberg)}}
\makeatletter
\@namedef{subjclassname@2010}{
  \textup{2010} Mathematics Subject Classification}
\makeatother
\subjclass[2010]{42B20, 42B35, 47A30}
\keywords{}
\begin{abstract} 
Following the ideas of \cite{LNO} we prove that there is a sequence of weights $w\in A^d_1$ such that $[w]^d_{A_1}\to \infty$, and martingale transforms $T$
such that
 $\|T: L^1(w) \to L^{1, \infty}(w)\| \ge c [w]^d_{A_1}\log [w]^d_{A_1}$ with an absolute positive $c$.
We also show the existence of the sequence of weights (now in $A_2$) such that $[w]^d_{A_2}\to \infty$, and such that 
  we have

\textup(1\textup) $[w]_{A_2^d}\asymp \|M^d\|_{w^{-1}}^2$;

\textup(2\textup)  $\|S_{w}: L^{2} (w) \to L^2(w^{-1})\| \ge c\, \|M^d\|_{w^{-1}}\sqrt{\log \|M^d\|_{w^{-1}}}$;

\textup(3\textup)
$
\|S_{w}: L^{2,1} (w) \to L^2(w^{-1})\| \ge c\,  \|M^d\|_{w^{-1}}\sqrt{\log \|M^d\|_{w^{-1}}};
$

\textup(4\textup)
 $\|S: L^2(w)\to L^{2, \infty}(w)\|=\|S_{w^{-1}}: L^{2}(w^{-1}) \to L^{2,\infty}(w)\|\le  C\, \|M^d\|_{w^{-1}}\le C\, ([w]^d_{A_2})^{1/2}$.

 \end{abstract}
\maketitle 

\section{Introduction}
\label{intro}


In \cite{R} Reguera answered in the negative the following question of Muckenhoupt: is it true that the martingale transform will be weakly bounded from $L^1(w)$ to $L^{1, \infty}(Mw)$, where $M$ means Hardy-Littlewood maximal function.

In their paper \cite{RT} Reguera and Thiele answered in the negative the following question of Muckenhoupt: is it true that the Hilbert transform  will be weakly bounded from $L^1(w)$ to $L^{1, \infty}(Mw)$.

Their construction gives a very irregular weight $w$ (a sort of sophisticated sum of delta functions), so their weight was not in Hunt-Muckenhoupt--Wheeden class $A_1$. In particular, another problem of Muckenhoupt remained still open even after \cite{R}, \cite{RT}: can one give the linear estimate 
of the norm:
\begin{equation}
\label{Tlin}
\|T: L^1(w)\to L^{1, \infty}(w) \|\le C[w]_{A_1}\,?
\end{equation}

The estimate 
\begin{equation}
\label{LOP}
\|T: L^1(w)\to L^{1, \infty}(w) \|\le C[w]_{A_1}\log [w]_{A_1}
\end{equation}
was obtained in \cite{LOP1}, \cite{LOP2}.
But the question was whether the logarithmic term can be dropped.

The Bellman function construction in \cite{NRVV1}, \cite{NRVV2} showed that linear estimate is false, in fact, in these preprints  a sequence of  weights 
$w$, was proved to exist such that $[w]_{A_1}\to \infty$ but $\|T: L^1(w)\to L^{1, \infty}(w) \|\ge c[w]_{A_1}(\log [w]_{A_1})^{1/3}$.
Operator $T$ was either a martingale transform, or the dyadic shift, or the Hilbert transform.

Finally \cite{LNO}  proves the existence of  a sequence of  weights 
$w, [w]_{A_1}\to \infty$, such that  $\|T: L^1(w)\to L^{1, \infty}(w) \|\ge c[w]_{A_1}\log [w]_{A_1}$. Operator $T$ is the Hilbert transform in \cite{LNO}.

Using the idea of \cite{LNO}, here we prove the existence of  the weights, which will satisfy the same properties as above, but for $T$ being the martingale transform. We also consider questions related to the estimate of the square functions given in \cite{DSaLaRey}.

\section{Construction of weights and special intervals}
\label{spI}

We follow \cite{LNO} in an almost verbatim fashion. As our goal is to repeat the proof of \cite{LNO} for the martingale transform, the main issue is to choose the signs of the martingale transform. This should be done consistently simultaneously for all points, where we estimate the transform from below.

For dyadic interval $I$ we denote $I^-, I_+$ its left and right children.  We also denote
$$
I_0=I, I_1= I^{++}, I_2= I_1^{++},\dots. I_{m-1} = I_{m-2}^{++},\, m=2, \dots, k,
$$
where we put 
$$
\eps= 4^{-k}\,.
$$
We fix a large number $p$ (will be $\asymp 1/\eps$), and we build the sequence of weights by the rule:
let $\om, \sigma$ two numbers such that $\om\sigma=p$
$$
w_0(\om, \sigma, I) = \frac{\omega}{\sqrt{p}}\left( (\sqrt{p}-\sqrt{p-1})\chi_{I_-} + (\sqrt{p}+\sqrt{p-1})\chi_{I_+} \right) \,,
$$
\begin{equation}
\label{wn}
w_n(\om,\sigma, I) = \sum_{m=0}^{k-2} w_{n-1} (3\om, \sigma/3, I_m^{+-}) +\frac{\om}{p}\left( \sum_{m=0}^{k-2} \chi_{I_m^{-}} + \chi_{I_{k-1}^-} + \tau(\eps)\chi_{I_{k-1}^+} \right),
\end{equation}
where $\tau(\eps)=\frac{9\eps}{1+5\eps}$ we find from the following
\begin{figure}
\includegraphics[scale=1]{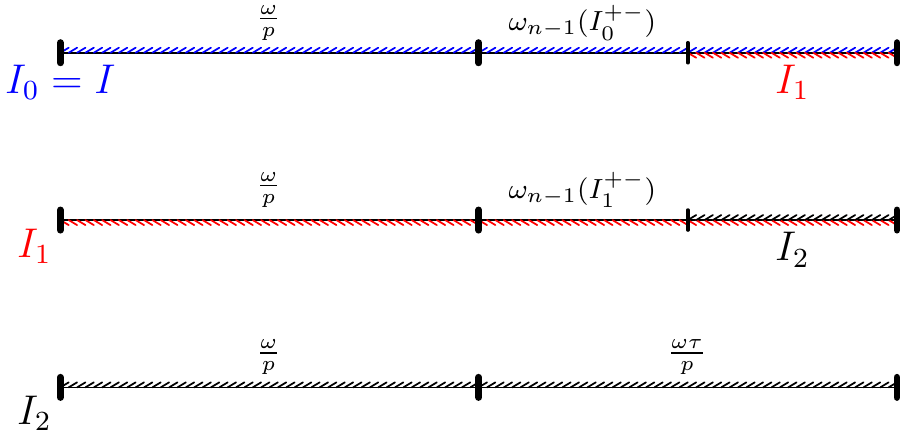}
\caption{Construction of $w_{n}$ when $k=3$.}
\label{fig:dom}
\end{figure} 
\begin{lemma}
\label{av}
$\La w_n(\om, \sigma, I)\Ra_I =\om,\,\, \La w_n^{-1}(\om, \sigma, I)\Ra_I  =\sigma.$
\end{lemma}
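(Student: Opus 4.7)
I would argue by induction on $n\ge 0$. For the base case $n=0$, the identity $\La w_0\Ra_I=\om$ is immediate: the two values that $w_0$ takes on $I_-$ and $I_+$ are $\frac{\om}{\sqrt p}(\sqrt p\mp\sqrt{p-1})$, and their average is $\om$. For $\La w_0^{-1}\Ra_I=\sigma$ the key is the identity $(\sqrt p-\sqrt{p-1})(\sqrt p+\sqrt{p-1})=1$, which flips the two values under inversion, so $w_0^{-1}$ equals $\frac{\sqrt p}{\om}(\sqrt p\pm\sqrt{p-1})$ on the two halves and averages to $\frac{p}{\om}=\sigma$, using the hypothesis $\om\sigma=p$.

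For the inductive step, fix $n\ge 1$ and assume the lemma at level $n-1$. The first observation is that the rescaled parameters $(3\om,\sigma/3)$ still satisfy $(3\om)(\sigma/3)=p$, so the inductive hypothesis applies on each interval $I_m^{+-}$ and yields
\[
\int_{I_m^{+-}}w_{n-1}(3\om,\sigma/3,I_m^{+-})=3\om\,|I_m^{+-}|,\qquad \int_{I_m^{+-}}w_{n-1}^{-1}(3\om,\sigma/3,I_m^{+-})=\frac{\sigma}{3}\,|I_m^{+-}|.
\]
The second observation is geometric: since $I_m^{++}=I_{m+1}$ and $|I_m|=4^{-m}|I|$, the family $\{I_m^{-},I_m^{+-}\}_{m=0}^{k-2}\cup\{I_{k-1}^-,I_{k-1}^+\}$ partitions $I$ up to measure zero. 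Decomposing $\int_I w_n$ and $\int_I w_n^{-1}$ along this partition, applying the inductive formulas to the recursive pieces, and summing the atomic pieces by the geometric identity $\sum_{m=0}^{k-2}4^{-m}=\tfrac43(1-4\eps)$ with $\eps=4^{-k}$, reduces each of the two averages to a closed-form expression in $\om$, $\sigma$, $p$, $\eps$, and $\tau(\eps)$.

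The remaining step is to check that these two closed forms equal $\om|I|$ and $\sigma|I|$ respectively. The particular value $\tau(\eps)=\frac{9\eps}{1+5\eps}$ is the unique solution of $\frac{1-4\eps}{9}+\frac{2(1-\eps)}{3}+\frac{2\eps}{\tau(\eps)}=1$, which is precisely the requirement $\La w_n^{-1}\Ra_I=\sigma$; plugging this $\tau$ into the corresponding identity for $\La w_n\Ra_I=\om$ forces $p$ to satisfy $4\eps p=\frac{2+8\eps+44\eps^2}{3(1+5\eps)}$, whence $p\asymp 1/\eps$, consistent with the standing assumption. I do not expect any genuine conceptual obstacle here: once one observes that rescaling preserves the product $\om\sigma=p$ and that $I$ is cleanly partitioned by the $I_m^{-}$, $I_m^{+-}$ pieces, the lemma is a direct verification. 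The only thing requiring real care is the bookkeeping of the geometric sums and the algebra that isolates $\tau(\eps)$.
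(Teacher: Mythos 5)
Your proposal is correct and follows essentially the same route as the paper: induction on $n$, decomposing $I$ into the forming intervals $I_m^{+-}$ (where the rescaled parameters $3\om,\sigma/3$ still multiply to $p$) and the pieces carrying the constant values $\tfrac{\om}{p}$ and $\tfrac{\om\tau}{p}$, and then reading off the two equations that force $\tau=\tfrac{9\eps}{1+5\eps}$ and $p\asymp \tfrac1{6\eps}$. Your write-up is in fact a bit more explicit than the paper's (the base case via $(\sqrt p-\sqrt{p-1})(\sqrt p+\sqrt{p-1})=1$ and the exact partition of $I$ are spelled out), but there is no substantive difference.
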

\begin{proof}
By induction. For $n=0$ it is clear.
Let it be proved for $n-1$. To prove for $n$, we  notice that
value $\frac{\om}{p}$ happens first of all on measure $\frac12(|I_0|+\dots |I_{k-2}|) = \frac1{2}( 1+1/4+\dots+ 1/4^{k-2})|I|$, that is on $1/2\frac{1-4\eps}{3/4}|I|$, that is
$\frac23(1-4\eps)|I|$. But the same value $\frac{\om}{p}$ happens on measure $\frac12 |I_{k-1}| =2\eps|I|$. So  $\frac{\om}{p}$ totally  assigned to
measure $\frac23 (1-4\eps + 3\eps)|I| = \frac23(1-\eps)|I|$. Value $\frac{\om \tau}{p}$ is assigned to measure $2\eps|I|$. We are left with $(\frac13-\frac43\eps)|I|$. Using induction hypothesis we get that the average of $w_n$ over $I$ is
$$
3(\frac13-\frac43\eps)  +\frac23(1-\eps)\frac1p +2\eps\frac{\tau}{p} =1,
$$which gives the first equation on $\tau, p$:
\begin{equation}
\label{1}
\left( \frac23(1-\eps) +2\eps\tau\right)\frac1p -4\eps=0\,.
\end{equation}
Now we do the same with weight 
$
w_n^{-1}:
$
\begin{equation}
\label{2}
\frac{\sigma}{3} (\frac13-\frac43\eps)  + \sigma \frac23(1-\eps) + \frac{\sigma}{\tau} 2\eps =\sigma\,.
\end{equation}
Equations \eqref{1}, \eqref{2} give
\begin{equation}
\label{3}
\tau=\frac{9\eps}{1+5\eps},\,\, p \approx  \frac1{6\eps}\,.
\end{equation}
\end{proof}

In what follows $n$ will be chosen
\begin{equation}
\label{n}
n=4^k\,.
\end{equation}

Notice that
\begin{equation}
\label{p}
p  \asymp 4^k\,.
\end{equation}


\begin{lemma}
\label{Qp2}
We have $[w_{n}(\omega, \sigma, I)]_{A^{d}_{2}(I)} \asymp p^2$. 
\end{lemma}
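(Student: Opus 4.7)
The plan is to prove both directions of $\asymp p^2$ by induction on $n$, with $n=1$ as the base case. First I would identify the key objects: the dyadic averages $A_j := \La w_n(\om,\sigma,I)\Ra_{I_j}$ and $B_j := \La w_n^{-1}(\om,\sigma,I)\Ra_{I_j}$ for $j = 0, 1, \ldots, k-1$. Decomposing $I_{j-1} = I_{j-1}^{-} \sqcup I_{j-1}^{+-} \sqcup I_j$ with relative measures $1/2, 1/4, 1/4$ and invoking Lemma \ref{av} on the recursive block $w_{n-1}(3\om, \sigma/3, I_{j-1}^{+-})$ produces the one-step recursions
\[ A_{j-1} = \frac{\om}{2p} + \frac{3\om}{4} + \frac{A_j}{4}, \qquad B_{j-1} = \frac{7\sigma}{12} + \frac{B_j}{4}. \]
Starting from $A_0 = \om$ and $B_0 = \sigma$ (again by Lemma \ref{av}) and iterating, these solve explicitly as
\[ A_j = \om\!\left(1 - \frac{2(4^j - 1)}{3p}\right), \qquad B_j = \frac{\sigma\left(7 + 2\cdot 4^j\right)}{9}. \]
As a consistency check, matching these at $j=k-1$ against $A_{k-1} = \om(1+\tau)/(2p)$ and $B_{k-1} = \sigma(1+1/\tau)/2$ recovers precisely the identities \eqref{3}.

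Next I would upper bound $[w_n]_{A_2^d(I)} \lesssim p^2$ by classifying each dyadic $J \subseteq I$ into three types: (a) $J$ lies in a constant piece of $w_n$, i.e.\ $J \subseteq I_m^-$ or $J \subseteq I_{k-1}^+$, giving product of averages equal to $1$; (b) $J \subseteq I_m^{+-}$ for some $m$, handled by the induction hypothesis applied to $w_{n-1}(3\om,\sigma/3,I_m^{+-})$, which has the same product $3\om \cdot \sigma/3 = p$ so its $A_2^d$-characteristic remains $\lesssim p^2$; (c) one of the finitely many ``cutting'' intervals $J \in \{I_j : 0 \le j \le k-1\} \cup \{I_j^+ : 0 \le j \le k-2\}$. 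For (c), the explicit formulas give
\[ A_j B_j = \frac{(7 + 2\cdot 4^j)(3p + 2 - 2\cdot 4^j)}{27}, \]
a concave quadratic in $y = 4^j$ with maximum $(3p+9)^2/108 \lesssim p^2$, while on $I_j^+ = I_j^{+-} \sqcup I_{j+1}$ one has $\La w_n\Ra_{I_j^+} = (3\om + A_{j+1})/2 \le 2\om$ and $\La w_n^{-1}\Ra_{I_j^+} = (\sigma/3 + B_{j+1})/2 \lesssim \sigma\cdot 4^{j+1}$, giving product $\lesssim \om\sigma\cdot 4^k \lesssim p\cdot p = p^2$.

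For the matching lower bound it suffices to exhibit one good interval: since $p \asymp 4^k$, choosing $j^* = k-2$ gives $4^{j^*}/p \asymp 3/8$, placing $y = 4^{j^*}$ well inside the concavity region of $(7+2y)(3p+2-2y)$, so $A_{k-2} B_{k-2} \asymp p^2$ directly from the formulas above. The $n=1$ base case works by the same argument since the recursion uses only Lemma \ref{av}. The main obstacle I expect is the bookkeeping in case (c) of the upper bound: one must verify that the listed cutting intervals truly exhaust all dyadic subintervals of $I$ not contained in a single constant piece or a single $w_{n-1}$-block $I_m^{+-}$, and one must track carefully that the parameter rescaling $(\om,\sigma)\mapsto(3\om,\sigma/3)$ preserves the product $p$, so that the induction actually closes.
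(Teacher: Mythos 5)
Your proposal is correct, and its two halves compare differently with the paper. The lower bound is essentially the paper's own argument in disguise: the paper also tests the $A_2$ product on the single interval $I_{k-2}$, writing out its pieces directly, $\La w_n\Ra_{I_{k-2}}=\frac12\cdot\frac{\om}{p}+\frac14\cdot 3\om+\frac18\cdot\frac{\om}{p}+\frac18\cdot\frac{\om\tau}{p}$ against the corresponding averages of $w_n^{-1}$, and gets $[w_n]_{A_2^d}\gtrsim p/\tau\asymp p^2$; your solved recursions for $A_j,B_j$ reproduce exactly these numbers at $j=k-2$. Where you genuinely diverge is the upper bound: the paper does not verify $[w_n]_{A_2^d}\le Cp^2$ by hand at all, but deduces it from Theorem \ref{Md} (testing $M^d$ on $\chi_Jw_n$ gives $\La w_n\Ra_J\La w_n^{-1}\Ra_J\le A^2p^2$ for every dyadic $J$), whereas you give a direct induction on $n$: constant pieces give product $1$, the blocks $I_m^{+-}$ are absorbed by the hypothesis for $w_{n-1}(3\om,\sigma/3,\cdot)$ (same $p$, so the constant does not deteriorate), and the finitely many cutting intervals $I_j$, $I_j^+$ are handled by your explicit formulas together with the AM--GM bound $(7+2y)(3p+2-2y)\le(3p+9)^2/4$. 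The bookkeeping you flag is harmless: any dyadic $J\subseteq I$ not inside a constant piece or a block $I_m^{+-}$ must be one of the tower intervals $I_j$ or $I_j^{+}$, and the induction should bottom out at $w_0$ (whose characteristic is exactly $\om\sigma=p$), so state the base case as $n=0$ rather than $n=1$. What each approach buys: the paper's route is a one-line deduction from Theorem \ref{Md}, which it needs anyway and proves at length in Section \ref{MAX}; yours is elementary and self-contained, establishing the lemma without any maximal-function machinery, and as a byproduct your closed formulas for $A_j$, $B_j$ make transparent where in the construction the $A_2$ characteristic is actually attained.
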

\begin{proof}
Consider the  dyadic  interval $I_{k-2}$. 
We have 
\begin{align*}
&\langle w_{n} \rangle_{I_{k-2}}=\frac{1}{2}\cdot \frac{\omega}{p}+\frac{1}{4} \cdot 3\omega +\frac{1}{8}\cdot \frac{\omega}{p} + \frac{1}{8}\cdot \frac{\omega \tau }{p};\\
&\langle \omega_{n}^{-1}\rangle_{I_{k-2}} = \frac{1}{2}\cdot \frac{p}{\omega}+\frac{1}{4} \cdot \frac{\sigma}{3} +\frac{1}{8}\cdot \frac{p}{\omega} + \frac{1}{8}\cdot \frac{p}{\omega \tau}.
\end{align*}
Therefore 
$$
[w_{n}]_{A_{2}^{d}(I)}\geq \langle w_{n} \rangle_{I_{k-2}}\langle \omega_{n}^{-1}\rangle_{I_{k-2}} > \frac{p}{\tau} \frac{4}{32} \asymp p^{2}.
$$
The reverse inequality $[w_{n}]_{A_{2}^{d}(I)}\leq Cp^{2}$ follows from Theorem~\ref{Md}. 
\end{proof}

Intervals of type $I_{k-1}^+$ play special role.  We call them special. Assume that $I$ is an interval involved in forming $\om_{n-\ell}$. Then there is only one 
special interval in $I\setminus \supp w_{n-\ell-1}$.  Its length is $2 \frac1{4^k} |I|$. But there are $k-2$ such special intervals in $I\cap (\supp w_{n-\ell-1} \setminus \supp w_{n-\ell-2})$. Their  total length is  
$$
2\frac1{4^k}\sum_{m=1}^{k-2} \frac1{4^m} |I|= \frac13\big(1-\frac1{4^{k-2}}\big)\frac2{4^k}|I|\,.
$$
Similarly, the length of the union of special intervals in $I\cap (\supp w_{n-\ell-2} \setminus \supp w_{n-\ell-3})$ is
$$
\left(\frac13\big(1-\frac1{4^{k-2}}\big)\right)^2\frac2{4^k}|I|
$$
et cetera.

\medskip

If we denote the family of such  special intervals  in 
$$
[0,1) \cap (\supp w_{n-\ell}\setminus \supp w_{n-\ell-1}),
$$
by $\cA_\ell$, and their union by by $A_\ell$, we then have
\begin{equation}
\label{A}
|A_\ell | = \left(\frac13\big(1-\frac1{4^{k-2}}\big)\right)^\ell \,\frac2{4^k}\,.
\end{equation}

\section{Martingale transform estimate}
\label{mart}
We are going to find $A_1^d$ weights such that
$$
\|T: L^1(w) \to L^{1, \infty} (w)\| \ge c [w]_{A_1^d} \log [w]_{A_1^d}\,.
$$

It is enough to construct weights $w\in A_2$ such that maximal function 
$$
\|M^d: L^2(w^{-1})\to L^2(w^{-1}) \|\le A \,p,
$$
but
\begin{equation}
\label{log1}
\|T(\chi_{I_0} w)\|^2_{w^{-1}} \ge c p^2 (\log p)^2\|\chi_{I_0}\|_w\,.
\end{equation}
This is explained in \cite {LNO} by using the extrapolation of Rubio de Francia. The reader may find this explanation repeated below during the proof of Theorem \ref{log}.

\medskip

We choose the same sequence of weights as in Section \ref{spI}.
Now consider a collection of special intervals $J$ as was introduced in Section \ref{spI}. This family  splits into $\cA_\ell$ collections, $A_\ell=\cup_{J\in \cA_\ell}J$.
Let $J\in \cA_\ell$, $x\in J$. First we want to estimate from below 
$$
T_Jw_n(x) = \sum_{R\in \cD: R\in row(J)} \eps_R(w_n, h_R)h_R (x), \,x\in J\,.
$$
Let us explain what is $row(J)$. Let the interval forming $w_{n-\ell}$ and containing $J$ be called $K$. Then $J= K_{k-1}^{+}$. Consider also $K=K_0, K_1=K^{++}, \dots, K_m,\dots, K_{k-2}$.
$J$ is the right child of $K_{k-1}$. In the sum forming $Tw_n(x), x\in J,$ above we choose first $R=K^{+}, K_1^{+},\dots, K_m^{+}, \dots, K_{k-2}^{+}$. 
Call them $R(J)=\{K^{+}(J), K_1^{+}(J),\dots, K_m^{+}(J), \dots, K_{k-2}^{+}(J)\}$.

\medskip

We will refer to this collection $R(J)=\{K^{+}(J), K_1^{+}(J),\dots, K_m^{+}(J), \dots, K_{k-2}^{+}(J)\}$ as the {\it row} of intervals, this is $row(J)$.

\medskip

We did not choose yet the signs $\eps_{R}$. Here is the choice
\begin{equation}
\label{ch}
\forall J\in \cA_\ell \,\,\text{special}, \, \text{and for even} \,\ell,\, \,\eps_{R} =-1, \,\, \text{if} \,R\in R(J),\,\,\text{otherwise}\,\, \eps_R =0\,.
\end{equation}
In other words, intervals $I$ in the union of all rows of all special intervals in $\cA_\ell$ with even $ \ell$, got $\eps_I=-1$, all other intervals $I\in \cD$ got $\eps_I=0$.

\medskip

Recall that we fixed an interval $J\in \cA_\ell$, $ \ell$ being even, and $x\in J$. We have then
$$
(w_n, h_{K_m^{+}(J)})h_{K_m^{+}(J)}(x)= (\La w_n\Ra_{K_m^{+-}(J)} - \La w_n\Ra_{K_m^{++}(J)}),\, x\in J\,.
$$

But interval $K_m^{+-}$ is by construction a forming interval of $w_{n-\ell-1}$, thus by \eqref{wn} the average over it is $3^{\ell+1}\omega$. On the other hand,
the average $\La w\Ra_{K_m^{++}}$ will be some average of $\frac{\om}{p}$ and of 
$3^{\ell+1}\omega$ on $k-2-m$ other intervals forming $w_{n-\ell-1}$ and lying in $K$ 
to the right of $K_m^{+-}$.  Their total mass is at most $\frac13 |K_m^{+-}|$. Thus, 
this second average is fixed small constant smaller than  $3^{\ell+1}\omega$. Thus, with positive absolute constant $c$
\begin{equation}
\label{KmJ}
(w, h_{K_m^{+}(J)})h_{K_m^{+}(J)}(x)\ge  c_1 3^{\ell+1} \om\,.
\end{equation}
Hence, if  $\ell$ is even we will have positive contributions of order $3^{\ell+1}$ from its row $row(J)$.
Therefore,
\begin{equation}
\label{TJe}
x\in J, \, J\in \cA_\ell\Rightarrow T_Jw_n(x) \ge  ck 3^{\ell+1} \om\,,
\end{equation}
where one can see that $c=\frac23-O(\frac1k)$.

\medskip

Now we need to bookkeep the contribution of $T_{\tilde J} w_n (x)$ at the same point $x\in J$, where we need 
to take into account all special $\tilde J\neq J$. This contribution is formed by  intervals $\tilde J\in \cA_{\ell'}$, $\ell'$ is even, $\ell' =\ell-2, \ell-4,\dots, 0$. All other contributions are zero (for $x$ in this fixed $J$).

As an example, consider the tower of intervals
 $J\subset K_{m_\ell}^{+-}(\tilde J_\ell)\subset  \dots \subset K_{m_0}^{+-}(\tilde J_0)$. 
Interval $ K_{m_{\ell'}}^{+-}(\tilde J_{\ell'})$ is a forming interval of $w_{n-\ell'-1}, \,\ell'<\ell$, $\ell'$ is even. 
But the contribution will be not only from this tower, but also from all
the intervals lying in the same rows as the intervals in the tower above.

The contribution to $Tw_n(x), x\in J$, of the rows assigned to other special intervals is zero. 

We need to consider only the contribution of the rows of intervals to which the intervals in the tower above belong. This contribution will be (by absolute value) at most
$$
k(3^{\ell+1-2}+ 3^{\ell+1-4}+\dots)\le \frac12 k 3^\ell\,,
$$
and, therefore, the total contribution of those $\tilde J$'s sums to at most $|\sum_{i=0, i\, even}^{\ell -2}T_{\tilde J_i}w_n (x)|\le \frac12 k 3^\ell$, $x\in J$,  and
 cannot spoil the number $ c k 3^{\ell+1} \om$ from \eqref{TJe}.


\medskip


Also $w^{-1}_n$-measure of such a $J\in \cA_\ell$ as above is $\frac{1+\eps}{9\eps} \frac{\sigma}{3^\ell} |J|$.
Combining this, \eqref{TJe},   and   estimate of $|A_\ell|$  from \eqref{A}, we get ($\ell$ is even)
$$
\int_{A_\ell} (T w_n)^2 w^{-1}_n dx \ge c\om^2 k^2 3^{2\ell} \frac{1+\eps}{9\eps} \frac{\sigma}{3^\ell}\left(\frac13 \big(1-\frac1{4^{k-2}}\big)\right)^\ell \frac1{4^k}\,.
$$
Or, using that $\om\sigma=p,  4^k =1/\eps$, we get
$$
\int_{A_\ell} (T w_n)^2 w^{-1}_n dx \ge c\om \, k^2 \,p \frac{1+\eps}{9\eps} \left(\big(1-\frac1{4^{k-2}}\big)\right)^\ell \frac1{4^k}\ge c \om \, k^2\, p \left(\big(1-\frac1{4^{k-2}}\big)\right)^\ell\,.
$$
Now,
$$
\int_0^1(T w_n)^2 w^{-1}_n dx \ge  c \om \, k^2\, p \sum_{\ell=0,\, \ell\, even}^{4^k}\left(\big(1-\frac1{4^{k-2}}\big)\right)^\ell \ge c\om \, k^2\, p 4^k\,.
$$
But $4^k\approx p, k\approx \log p$. So we get
\begin{equation}
\label{Sbelow}
\int_0^1(T w_n)^2 w^{-1}_n dx \ge  c \om \, p^2 (\log p)^2=c\,  p^2 (\log p)^2 \int_0^1 w_n dx\,.
\end{equation}

\begin{theorem}
\label{Md}
With a finite absolute constant $A$ one has $\|M^d: L^2(w_n^{-1})\to L^2(w_n^{-1})\| \le A\, p$.
\end{theorem}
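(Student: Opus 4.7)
The plan is to verify the theorem via Sawyer's testing characterization of the norm of the maximal operator on a weighted $L^2$: with $v = w_n^{-1}$ (so that the dual weight $\sigma := v^{-1}$ equals $w_n$) we have the equivalence
$$\|M^d : L^2(w_n^{-1}) \to L^2(w_n^{-1})\|^2 \asymp T_n := \sup_{Q \text{ dyadic}} \frac{1}{w_n(Q)}\int_Q \bigl[M^d(w_n\chi_Q)\bigr]^2 w_n^{-1}\,dx,$$
so it suffices to establish $T_n \lesssim p^2$. By the self-similar construction \eqref{wn}, the restriction of $w_n$ to any forming interval $I_m^{+-}$ of generation $j$ is (up to homothety) a scaled copy of $w_{n-j}(3^j\omega,\sigma/3^j,\cdot)$; since the testing ratio is scale invariant (its value depends only on the product $p=\omega\sigma$ and on $n$), by localization the verification reduces to checking the inequality at $Q = I$ for every $n$, which I carry out by induction.

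At $Q = I$ I split $I$ into the \emph{sparse} union $S := \bigcup_{m=0}^{k-1} I_m^- \cup I_{k-1}^+$ (of total measure $\tfrac{2}{3}|I|$ up to lower order terms, on which $w_n$ is piecewise constant equal to $\omega/p$ or $\omega\tau/p$) and the \emph{recursive} union $\bigcup_{m=0}^{k-2} I_m^{+-}$. Solving the fixed-point recursions for $\La w_n\Ra_{I_m}$ and $\La w_n\Ra_{I_m^+}$ (the same ones that underlie Lemma~\ref{av}) gives $\La w_n\Ra_{I_m}\le \omega$ and $\La w_n\Ra_{I_m^+}\le 2\omega$ for all $m$, so that $M^d(w_n\chi_I)(x)\le 2\omega$ for every $x \in S$. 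Combined with the values $w_n^{-1}=\sigma$ on $S\setminus I_{k-1}^+$ and $w_n^{-1}=p/(\omega\tau)\asymp p\sigma$ on $I_{k-1}^+$ (of measure $2\eps|I|\asymp |I|/p$), a direct integration gives
$$\int_S \bigl[M^d(w_n\chi_I)\bigr]^2 w_n^{-1}\,dx \;\le\; C\,\omega^2\sigma\,|I| \;=\; C\,\omega\sigma\cdot w_n(I) \;=\; C\,p\cdot w_n(I).$$

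On each recursive piece $I_m^{+-}$ the average $\La w_n\Ra_{I_m^{+-}}=3\omega$ strictly exceeds every $\La w_n\Ra_J$ for a dyadic $J$ properly containing $I_m^{+-}$ inside $I$ (all such are $\le 2\omega$ by the previous step), so $M^d(w_n\chi_I)$ restricted to $I_m^{+-}$ coincides pointwise with $M^d(w_n\chi_{I_m^{+-}})$; by the homothety $w_n|_{I_m^{+-}} = w_{n-1}(3\omega,\sigma/3,\cdot)$ and the scale invariance of the testing ratio, the total recursive contribution equals exactly $T_{n-1}\cdot w_n(I)$. Combining the two parts yields $T_n \le Cp + T_{n-1}$, and iterating from the base case $T_0 \asymp p$ (checked directly on $w_0$) through the $n = 4^k \asymp p$ levels of the construction produces $T_n \lesssim np \asymp p^2$, hence $\|M^d\|_{L^2(w_n^{-1})}^2\lesssim p^2$, which is the claim. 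The principal technical obstacle is the sharp sparse estimate $M^d(w_n\chi_I)\le 2\omega$ on $S$, which requires careful tracking of the averages $\La w_n\Ra_{I_m}$ and $\La w_n\Ra_{I_m^+}$ through the nested recursions behind Lemma~\ref{av}; the apparently loose bound $M^d(w_n\chi_I)\le 2\omega$ is in fact essential, because a worse bound would multiply the sparse contribution by an extra factor of $p$ and spoil the iteration.
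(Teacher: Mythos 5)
Your overall strategy (Sawyer-type testing for $M^d$ plus exploiting the self-similarity of $w_n$) is the same as the paper's, and the core of your induction is sound: the identities $\La w_n\Ra_{I_m}\le\omega$, $\La w_n\Ra_{I_m^+}\le 2\omega$, the resulting bound $M^d(w_n\chi_I)\le 2\omega$ on the sparse set $S$, the observation that $M^d(w_n\chi_I)=M^d(w_n\chi_{I_m^{+-}})$ on each forming interval because $\La w_n\Ra_{I_m^{+-}}=3\omega$ beats every ancestor average, and the resulting recursion $t_n\le Cp+t_{n-1}$ for the top-interval testing ratio, which with $n=4^k\asymp p$ gives $t_n\lesssim np\asymp p^2$. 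This is essentially the paper's computation reorganized: the paper replaces your generation-by-generation induction by the pointwise majorant $\tilde w$ of \eqref{tilde} (Lemma~\ref{maj}) and integrates $\tilde w^2 w^{-1}$ over the layers $F_j\setminus A_j$ and the special sets $A_j$ directly (Lemma~\ref{wnI0}); both bookkeepings accumulate $\asymp p\,w(I)$ per generation over $\asymp p$ generations.

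The genuine gap is your claim that ``by localization the verification reduces to checking the inequality at $Q=I$ for every $n$.'' Sawyer's test must be verified for \emph{all} dyadic $Q$, and scale invariance only disposes of those $Q$ that are homothetic copies of a stage of the construction, i.e.\ the top interval and the forming intervals $I_m^{+-}$ (and their descendants by recursion). It says nothing about the intermediate intervals $Q=I_m$, $I_m^{+}$, $I_{k-1}$, the dyadic parent of $I_{k-1}$, etc., whose restricted weights are not scaled copies of any $w_j$; there is also no monotonicity principle letting you pass from the top interval to such $Q$, since shrinking $Q$ shrinks the denominator $w_n(Q)$ as well. These intervals do satisfy the testing bound, but that has to be checked — this is exactly what the paper's Lemmas~\ref{wnIm}, \ref{wnIk1}, \ref{wnIk1hat} and Theorem~\ref{sloi1} are for, and the intervals containing the special piece $I_{k-1}^{+}$ (where $w_n^{-1}\asymp p\sigma$) need a little care. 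The verification can be done with estimates of the same flavor as your sparse bound, and then the paper's self-similarity argument after Theorem~\ref{sloi1} propagates it to all dyadic $Q$; but as written your reduction is unjustified, so the proposal is incomplete without this step.
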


We prove it in Section \ref{MAX}, the proof is the same as in \cite{LNO}, only easier because it is dyadic.

Given  Theorem \ref{Md} and what was done above  we can  now prove the following theorem, the analog of the main result of \cite{LNO}, but for the martingale transform instead of the Hilbert transform.

\begin{theorem}
\label{log}
There is a sequence of weights $w\in A^d_1$ such that $[w]^d_{A_1}\to \infty$, and martingale transforms $T$
such that
 $\|T: L^1(w) \to L^{1, \infty}(w)\| \ge c [w]^d_{A_1}\log [w]^d_{A_1}$ with an absolute positive $c$.
 \end{theorem}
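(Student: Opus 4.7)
The plan is a contradiction argument combined with Rubio--de--Francia extrapolation, following \cite{LNO}. Suppose, toward a contradiction, that there exists an increasing function $\Phi : (0,\infty) \to (0,\infty)$ with $\Phi(t) = o(t \log t)$ as $t \to \infty$ such that
$$\|T : L^1(v) \to L^{1,\infty}(v)\| \;\le\; \Phi([v]_{A_1^d})$$
for every dyadic $A_1$ weight $v$ and every dyadic martingale transform $T$. The strategy is to convert this weak-type hypothesis into a strong two-weight $L^2$ bound on the sequence $w_n$ from Section~\ref{spI} that contradicts the lower bound \eqref{Sbelow}.

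The conversion is the Rubio--de--Francia algorithm applied with respect to $w_n^{-1}$. For $g \ge 0$ in $L^2(w_n^{-1})$, set
$$Rg \;=\; \sum_{k=0}^{\infty} \frac{(M^d)^k g}{\bigl(2 \|M^d\|_{L^2(w_n^{-1})}\bigr)^k};$$
standard properties give $g \le Rg$ pointwise, $\|Rg\|_{L^2(w_n^{-1})} \le 2\|g\|_{L^2(w_n^{-1})}$, and $Rg \in A_1^d$ with
$$[Rg]_{A_1^d} \;\le\; 2 \|M^d\|_{L^2(w_n^{-1})} \;\le\; 2Ap,$$
the last inequality by Theorem~\ref{Md}. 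Dualizing $\|T w_n\|_{L^2(w_n^{-1})}$ against $g \ge 0$ with $\|g\|_{L^2(w_n^{-1})} \le 1$, replacing $g$ by its envelope $Rg$ under the integral, and feeding $Rg$ into the assumed weak-type bound through the Kolmogorov/duality pairing of \cite{LNO}, one arrives at
$$\|T w_n\|_{L^2(w_n^{-1})} \;\le\; C\,\Phi(2Ap)\,\|w_n\|_{L^1([0,1))}^{1/2}.$$
Comparing with \eqref{Sbelow} yields $\Phi(2Ap) \ge c'\, p\log p$ for all sufficiently large $p$, which contradicts $\Phi(t) = o(t \log t)$ as $p \to \infty$. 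Tracing the contradiction back to specific weights, the envelopes $v_n := R g_n$ (with $g_n$ chosen to nearly extremize the dual pairing) furnish a sequence in $A_1^d$ with $[v_n]_{A_1^d} \asymp p \to \infty$ on which $\|T: L^1(v_n) \to L^{1,\infty}(v_n)\| \ge c [v_n]_{A_1^d} \log [v_n]_{A_1^d}$, which is the theorem.

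The main obstacle is the extrapolation step itself: the proof must upgrade the assumed $L^1 \to L^{1,\infty}$ bound on a generic $A_1^d$ weight to a strong $L^2(w_n) \to L^2(w_n^{-1})$ bound whose dependence is on $\Phi(\|M^d\|_{L^2(w_n^{-1})}) \asymp \Phi(p)$, and \emph{not} on $\Phi([w_n]_{A_2^d}) = \Phi(p^2)$ --- the latter would be too weak to contradict $\Phi(t) = o(t\log t)$ (it would only give $\Phi(p^2) \gtrsim p\log p$, i.e.\ $\Phi(t) \gtrsim \sqrt{t}\log t$). Constants must therefore be tracked carefully through the duality/Kolmogorov argument. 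This is precisely the Rubio--de--Francia extrapolation step used in \cite{LNO}; the only change in the present setting is that $T$ is a dyadic martingale transform in place of the Hilbert transform, which if anything simplifies the bookkeeping.
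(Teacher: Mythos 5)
Your proposal is correct and follows essentially the paper's own route: the same weights $w_n$, the lower bound \eqref{log1}, Theorem \ref{Md}, and the Rubio de Francia envelope built from $M^d$ on $L^2(w_n^{-1})$, with your contradiction framing against $\Phi(t)=o(t\log t)$ being only a cosmetic repackaging of the paper's conclusion $p\log p\le C\,N(p)$ and of the identification of the weights as $W_n=\cR g_n$. The one step worth stating carefully is the conversion you delegate to the ``Kolmogorov/duality pairing'': as in \eqref{cR}--\eqref{NMd}, the weak-type hypothesis on $\cR g$ first yields the weak norm bound $\|T_{w^{-1}}:L^2(w^{-1})\to L^{2,\infty}(w)\|\le 4N$, and only then, via the $L^{2,1}$--$L^{2,\infty}$ duality, the (anti)symmetry of $T$, and the equality of the $L^{2,1}(w)$ and $L^{2}(w)$ norms of $\chi_{I_0}$, does one arrive at your inequality $\|Tw_n\|_{L^2(w_n^{-1})}\le C\,\Phi(2Ap)\,w_n(I_0)^{1/2}$; a direct strong-type dualization of $\|Tw_n\|_{L^2(w_n^{-1})}$ against the weak $(1,1)$ hypothesis would not go through.
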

 
 The proof of Theorem \ref{log} is verbatim the same as in \cite{LNO}, the corresponding weights $W_n\in A_1^d$ 
 are obtained from $w_n\in A_2^d$ constructed above by the method  of Rubio de Francia. For these weights
 one uses the same  dyadic martingale transforms as above. We repeat the proof for the convenience of the reader. The proof also shows how complicated are weights $W_n$ built with the help of $w_n$ from above.

\begin{proof}
We think of $T$ as of linear operator with nice kernel acting in all spaces with Lebesgue measure on 
$I_0$, in practice $T$ will be, say, a martingale transform with only finitely many $\eps_I$ non-zero, 
but the estimates will not depend on how many non-zeros $T$ has. 

We always think that $T$ has symmetric or anti-symmetric kernel.

By $T_w$ we understand the operator which acts on test functions as follows:
$$
T_w f= T (w f)\,.
$$
Let $w$ be a weight (for our goals it will be one of $w_n$ built above), and $\alpha>0$.
Let $g$ be a function from $L^2(w^{-1})$ to be chosen soon, $\|g\|_{w^{-1}}=1$, and  we use Rubio de Francia function
$$
\cR g =\sum_{k=0}^\infty\frac{(M^d)^k g}{2\|(M^d)^k\|_{L^2(w^{-1})}}\,.
$$
Then 
\begin{equation}
\label{Rg}
\|Rg\|_{w^{-1}}\le 2,\,\, g\le Rg, \,\, [Rg]_{A_1^d} \le \|M^d : L^2(w^{-1})\to L^2(w^{-1})\|\,.
\end{equation}
Then choosing appropriate  $g$, $\|g\|_{w^{-1}}=1$, we can write ($F=wf$) by using \eqref{Rg}:
\begin{align}
\label{cR}
& \alpha(w\{T_{w^{-1}} F > \alpha\})^{1/2} = \alpha(w\{T f > \alpha\})^{1/2} \le 2\alpha\int_{Tf >\alpha} g dx\le  \nonumber\\
&  2\alpha\int_{Tf >\alpha} \cR g dx \le 2N([\cR g]_{A_1^d}) \int |f| \cR g dx \le 2N([\cR g]_{A_1^d}) \|f\|_w\|\cR g\|_{w^{-1}} \le \nonumber\\
& 4 N([\cR g]_{A_1^d}) \|f\|_w= 4 N([\cR g]_{A_1^d}) \|F\|_{w^{-1}}\,.
 \end{align}
 Here $N([\cR g]_{A_1^d}) $ denotes the estimate from above of the weak norm $T: L^1(Rg)\to L^{1,\infty}(Rg)$.
 
 \medskip
 
 This weight $Rg$ is the future weight $W_n$ mentioned before the starting of the proof.
 
 \medskip
 
 Henceforth, one obtains the estimate
 \begin{equation}
 \label{N}
 N([\cR g]_{A_1^d}) \ge \frac14 \| T_{w^{-1}}: L^2(w^{-1})\to L^{2, \infty} (w)\|\,.
 \end{equation}
 By duality and (anti) symmetry of $T$, the latter norm is
  \begin{equation}
 \label{duality}
 \| T_{w^{-1}}: L^2(w^{-1})\to L^{2, \infty} (w)\|  = \|T_w: L^{2,1}(w)\to L^{2}(w^{-1})\|\ge \frac{\|T(\chi_{I_0} w)\|_{w^{-1}}}{\|\chi_{I_0}\|_w}\,.
 \end{equation}
In the last inequality we used the fact that in the norms of characteristic function in $L^{2,1}(w)$ and in $L^2(w)$  are the same.

\medskip

Use \eqref{Rg} and \eqref{N}, \eqref{duality}:
\begin{equation}
\label{NMd}
\frac{\|T(\chi_{I_0} w)\|_{w^{-1}}}{\|\chi_{I_0}\|_w}\le 4 N(\|M^d\|_{w^{-1}})\,.
\end{equation}

Now we plug into this inequality (with $w=w_n$) inequality \eqref{log1} and the result of Theorem \ref{Md}. Then we obtain
$$
p\log p \le C N(p)\,.
$$
This is what we wanted.

\end{proof}

\section{Square function}
\label{sqF}

 Let $J$ be one of intervals of $A_\ell$, $x\in J$. 

We want to estimate from below ($\cD$ is the dyadic lattice in $[I_0:=0,1)$)
$$
S^2w_n(x) \ge \sum_{R\in \cD: J\subset R} (w_n, h_R)^2\frac{\chi_R(x)}{|R|}\,.
$$
Let the interval forming $w_{n-\ell}$ and containing $J$ be called $K$. Then $J= K_{k-1}^{+}$. Consider also $K=K_0, K_1=K^{++}, \dots, K_m,\dots, K_{k-2}$.
$J$ is the right child of $K_{k-1}$. In the sum forming $S^2w(x), x\in J,$ above we choose only $R=K^{+}, K_1^{+},\dots, K_m^{+}, \dots, K_{k-2}^{+}$.

Then
$$
(w_n, h_{K_m^{+}})^2\frac{\chi_{K_m^{+}}(x)}{|K_{m^{+}}|}\ge c(\La w_n\Ra_{K_m^{+-}} - \La w_n\Ra_{K_m^{++}})^2,.
$$

But interval $K_m^{+-}$ is by construction a forming interval of $w_{n-\ell-1}$, thus by \eqref{wn} the average over it is $3^{\ell+1}\omega$. On the other hand,
the average $\La w\Ra_{K_m^{++}}$ will be some average of $\frac{\om}{p}$ and of $3^{\ell+1}\omega$ on $k-2-m$ other intervals forming $w_{n-\ell-1}$ and lying in $K$ to the right of $K_m^{+-}$.  Their total mass is at most $\frac13 |K_m^{+-}|$. Thus, this second average is fixed small constant smaller than  $3^{\ell+1}\omega$. Thus, with positive absolute constant $c$
\begin{equation}
\label{Km}
(w, h_{K_m^{+}})^2\frac{\chi_{K_m^{+}}(x)}{|K_{m^{+}}|}\ge c 3^{2\ell} \om^2\,.
\end{equation}
Hence,
\begin{equation}
\label{S2J}
x\in J, \, J\subset A_\ell\Rightarrow S^2w(x) \ge c k 3^{2\ell} \om^2\,.
\end{equation}
Also $w^{-1}_n$-measure of such a $J$ as above is $\frac{1+\eps}{9\eps} \frac{\sigma}{3^\ell} |J|$.
Combining this, \eqref{S2J} and   estimate of $|A_\ell|$  from \eqref{A} we get
$$
\int_{A_\ell} (S^2 w_n) w^{-1}_n dx \ge c\om^2 k 3^{2\ell} \frac{1+\eps}{9\eps} \frac{\sigma}{3^\ell}\left(\frac13 \big(1-\frac1{4^{k-2}}\big)\right)^\ell \frac1{4^k}\,.
$$
Or, using that $\om\sigma=p,  4^k =1/\eps$, we get
$$
\int_{A_\ell} (S^2 w_n) w^{-1}_n dx \ge c\om \, k \,p \frac{1+\eps}{9\eps} \left(\big(1-\frac1{4^{k-2}}\big)\right)^\ell \frac1{4^k}\ge c \om \, k\, p \left(\big(1-\frac1{4^{k-2}}\big)\right)^\ell\,.
$$
Now,
$$
\int_0^1(S^2 w_n) w^{-1}_n dx \ge  c \om \, k\, p \sum_{\ell=0}^{4^k}\left(\big(1-\frac1{4^{k-2}}\big)\right)^\ell \ge c\om \, k\, p 4^k,.
$$
But $4^k\approx p, k\approx \log p^2$. So we get
\begin{equation}
\label{Sbelow}
\int_0^1(S^2 w_n) w^{-1}_n dx \ge  c \om \, p^2 \log p^2=c\,  p^2 \log p \int_0^1 w_n dx\,.
\end{equation}

Recall that  the Lorentz space $L^{2,1}(\mu)$ is characterized by the following norm:
$$
\|f\|_{L^{2,1}(\mu)}:=\int_0^\infty (\mu\{x: |f(x)|>t\})^{1/2} dt\,.
$$
It is smaller than $L^2(w)$ of course. Its dual is $L^{2, \infty}(\mu)$ and the norm of characteristic functions in $L^{2,1}(\mu)$ and in $L^2(\mu)$ coincide.

\medskip

Let us denote 
\begin{align*}
&n:= \|S_w: L^{2,1} (w)\to L^2(w^{-1})\|,\\
&N:= \|S_w: L^{2} (w)\to L^2(w^{-1})\|\,.
\end{align*}

  \medskip

  Here is a comparison of norms of $M_d$ and $S$ in weighted spaces, which seems to be new.
  \begin{theorem}
  \label{SM}
There exists a positive absolute constant $c$ and sequence of weights $w$ in $A_2^d$, such that $[w]_{A_2}^d\to \infty$ and at the same time
\begin{equation}
\label{comp}
\|S_w\|_{L^2(w)\to L^2(w^{-1})}=\|S\|_{w^{-1}} \ge  c \|M^d\|_{w^{-1}}\sqrt{\log \|M^d\|_{w^{-1}}}\,.
\end{equation}
\end{theorem}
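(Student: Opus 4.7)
The plan is to reuse the weights $w_n$ constructed in Section~\ref{spI} and combine the integrated square-function lower bound \eqref{Sbelow} (already derived at the start of this section) with the two-sided estimate $\|M^d\|_{w_n^{-1}}\asymp p$. No new construction is required; the argument is essentially a renormalization of what has been done.

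First, since $S_w f = S(wf)$ by the definition of the weighted operator (parallel to $T_w f=T(wf)$ used in the proof of Theorem~\ref{log}), testing with $f=\chi_{I_0}$ gives
$$
\|S_w\chi_{I_0}\|_{L^2(w_n^{-1})}^2 \;=\; \int_0^1 (Sw_n)^2\,w_n^{-1}\,dx \;=\; \int_0^1 (S^2 w_n)\, w_n^{-1}\,dx,
$$
while $\|\chi_{I_0}\|_{L^2(w_n)}^2 = \int_0^1 w_n\,dx$. Dividing and invoking \eqref{Sbelow} immediately yields
$$
\|S_w\|_{L^2(w_n)\to L^2(w_n^{-1})}^2 \;\ge\; c\, p^2 \log p.
$$
The change-of-variables $g=wf$ also gives at no cost the identity $\|S_w\|_{L^2(w)\to L^2(w^{-1})}=\|S\|_{L^2(w^{-1})\to L^2(w^{-1})}=\|S\|_{w^{-1}}$ stated in the theorem.

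Next I would convert the lower bound $p\sqrt{\log p}$ into the claimed bound in terms of $\|M^d\|_{w_n^{-1}}$. The upper estimate $\|M^d\|_{w_n^{-1}}\le Ap$ is Theorem~\ref{Md}. For the matching lower bound, observe that $w_n^{-1}$ is itself a dyadic $A_2$ weight with $[w_n^{-1}]_{A_2^d}=[w_n]_{A_2^d}\asymp p^2$ by Lemma~\ref{Qp2}, so the standard Buckley inequality $[\mu]_{A_2^d}\lesssim \|M^d\|_{L^2(\mu)\to L^2(\mu)}^2$ (or, to keep the paper self-contained, a direct test of $M^d$ on $\chi_{I_{k-2}}w_n$ at the same interval that witnesses Lemma~\ref{Qp2}) gives $\|M^d\|_{w_n^{-1}}\gtrsim p$. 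Combining the two sides,
$$
\log p \;\asymp\; \log \|M^d\|_{w_n^{-1}}, \qquad p\;\asymp\;\|M^d\|_{w_n^{-1}},
$$
and substituting into the square-function lower bound produces
$$
\|S_w\|_{L^2(w_n)\to L^2(w_n^{-1})} \;\ge\; c\, p\sqrt{\log p} \;\asymp\; c\,\|M^d\|_{w_n^{-1}}\sqrt{\log \|M^d\|_{w_n^{-1}}},
$$
which is \eqref{comp}. Since $[w_n]_{A_2^d}\asymp p^2\to\infty$ as $k\to\infty$, the sequence $w_n$ satisfies all the requirements.

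The substantial work — producing the extra $\sqrt{\log p}$ factor over the trivial $A_2$ bound — is already carried out by the telescoping over the special families $\cA_\ell$ earlier in Section~\ref{sqF}, so the main obstacle here is really only the two-sided comparison $\|M^d\|_{w_n^{-1}}\asymp p$. That step is routine once Lemma~\ref{Qp2} and Theorem~\ref{Md} are in hand, and I expect no additional surprises.
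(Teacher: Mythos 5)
Your proposal is correct and follows essentially the same route as the paper, whose proof is literally to combine the lower bound \eqref{Sbelow} (tested on $\chi_{I_0}$, i.e.\ $f=\chi_{I_0}$, divided by $\int_0^1 w_n\,dx$) with Theorem \ref{Md}. The only addition is your matching lower estimate $\|M^d\|_{w_n^{-1}}\gtrsim p$, which is harmless but not actually needed: since the claim is a lower bound on $\|S_w\|$, the upper estimate $\|M^d\|_{w_n^{-1}}\le Ap$ alone already gives $\|M^d\|_{w_n^{-1}}\sqrt{\log\|M^d\|_{w_n^{-1}}}\le C\,p\sqrt{\log p}\le C'\|S_w\|_{L^2(w_n)\to L^2(w_n^{-1})}$ for large $p$.
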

\begin{proof}
Let us combine \eqref{Sbelow} and Theorem \ref{Md}.
\end{proof}


Now let us  keep in mind Lemma \ref{Qp2}.
Notice that in the right hand side we have (here $I_0:=[0,1)$)
$$
\int_0^1 w_n dx  =\|\chi_{I_0} \|^2_{w_n} = \|\chi_{I_0} \|^2_{L^{2,1}(w_n)}\,.
$$
In fact, $L^2(w)$ and $L^{2,1}(w)$ norms are equivalent on characteristic functions.
Hence, \eqref{Sbelow} demonstrates the following sharpening of Theorem \ref{SM}:

\begin{theorem}
\label{L21}
There exists a positive absolute constant $c$ and sequence of weights $w$ in $A_2^d$, such that $[w]_{A_2}^d\to \infty$ and at the same time

\textup(1\textup) $[w]_{A_2^d}\asymp \|M^d\|_{w^{-1}}^2$;

\textup(2\textup)  $\|S_{w}: L^{2} (w) \to L^2(w^{-1})\| \ge c \|M^d\|_{w^{-1}}\sqrt{\log \|M^d\|_{w^{-1}}}$;

\textup(3\textup)
$
\|S_{w}: L^{2,1} (w) \to L^2(w^{-1})\| \ge c  \|M^d\|_{w^{-1}}\sqrt{\log \|M^d\|_{w^{-1}}};
$

\textup(4\textup)
$\|S: L^{2} (w) \to L^2(w)\| \ge c \|M^d\|_{w}\sqrt{\log \|M^d\|_{w}}$.
\end{theorem}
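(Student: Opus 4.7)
The plan is to take $w=w_n$ from Section \ref{spI} with $p=p(n)\to\infty$, and verify the four claims in turn. For \textup{(1)}, Lemma \ref{Qp2} supplies $[w_n]_{A_2^d}\asymp p^2$ and Theorem \ref{Md} supplies the upper estimate $\|M^d\|_{w_n^{-1}}\le A p$. The matching lower bound is a universal one: testing $M^d$ at $f=\chi_I w_n$ in $L^2(w_n^{-1})$, and using $M^d f(x)\ge\La w_n\Ra_I$ for $x\in I$, one gets
$$
\|M^d\|_{L^2(w_n^{-1})}^2\ \ge\ \frac{\|M^d(\chi_I w_n)\|_{L^2(w_n^{-1})}^2}{\|\chi_I w_n\|_{L^2(w_n^{-1})}^2}\ \ge\ \La w_n\Ra_I\La w_n^{-1}\Ra_I
$$
for every dyadic $I$, so $\|M^d\|_{w_n^{-1}}^2\ge [w_n]_{A_2^d}$. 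Together these give $\|M^d\|_{w_n^{-1}}\asymp p\asymp [w_n]_{A_2^d}^{1/2}$ and establish \textup{(1)}.

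For \textup{(2)} and \textup{(3)}, I would test $S_w$ at the characteristic function $f=\chi_{I_0}$. Since $S_w f=S(w_n\chi_{I_0})=Sw_n$, the lower bound \eqref{Sbelow} reads
$$
\|S_w\chi_{I_0}\|_{L^2(w_n^{-1})}^2\ \ge\ c\,p^2\log p\cdot w_n(I_0)\ =\ c\,p^2\log p\cdot\|\chi_{I_0}\|_{L^2(w_n)}^2.
$$
Dividing and inserting $p\asymp\|M^d\|_{w_n^{-1}}$ from \textup{(1)} yields \textup{(2)}. Since the $L^{2,1}(w_n)$ and $L^2(w_n)$ norms of a characteristic function coincide, the same test function immediately gives \textup{(3)}.

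For \textup{(4)} the idea is to exploit the symmetry of the recursion \eqref{wn}: interchanging the parameters $(\om,\sigma)\leftrightarrow(\sigma,\om)$ (which preserves $\om\sigma=p$) produces $w_n^{-1}$ up to a harmless left$\leftrightarrow$right reflection of the dyadic tree. Consequently the proof of Theorem \ref{Md} and the lower-bound argument of Section \ref{sqF} apply verbatim with the roles of $w_n$ and $w_n^{-1}$ interchanged, yielding
$$
\|M^d\|_{w_n}\le A p\qquad\text{and}\qquad \int_0^1 (Sw_n^{-1})^2\,w_n\,dx\ \ge\ c\,p^2\log p\cdot w_n^{-1}(I_0).
$$
Now take $g:=w_n^{-1}\chi_{I_0}\in L^2(w_n)$, for which $\|g\|_{L^2(w_n)}^2=\int_0^1 w_n^{-1}\,dx=w_n^{-1}(I_0)$ and $Sg=Sw_n^{-1}$ on $I_0$. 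The displayed inequality gives $\|Sg\|_{L^2(w_n)}^2\ge cp^2\log p\cdot\|g\|_{L^2(w_n)}^2$, hence $\|S:L^2(w_n)\to L^2(w_n)\|\ge cp\sqrt{\log p}$; combined with $\|M^d\|_{w_n}\asymp p$ (from the argument above plus the universal lower bound of \textup{(1)} applied to $w_n$ in place of $w_n^{-1}$), this is \textup{(4)}.

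The one step requiring care, and the main potential obstacle, is the appeal to symmetry in \textup{(4)}. One must verify that the recursion \eqref{wn}, the definition of the families $\cA_\ell$ of special intervals, the row construction of Section \ref{mart}, and the geometric summation over $\ell$ are all manifestly invariant under $\om\leftrightarrow\sigma$ together with a left-right reflection, so that the two displayed estimates above follow from already established arguments rather than from a new calculation. I expect this verification to be routine given the explicit formulas in Section \ref{spI}, but it is the point where care is needed.
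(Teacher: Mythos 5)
Your handling of claims (1)--(3) is correct and coincides with the paper's own argument: Lemma \ref{Qp2} and Theorem \ref{Md} give $[w_n]_{A_2^d}\asymp p^2$ and $\|M^d\|_{w_n^{-1}}\le Ap$, the Buckley-type testing bound $\|M^d\|_{w^{-1}}^2\ge [w]_{A_2^d}$ (which is also what the upper half of Lemma \ref{Qp2} implicitly relies on) gives the matching lower bound, and (2), (3) follow by testing $S_{w_n}$ on $\chi_{I_0}$ via \eqref{Sbelow} together with the coincidence of the $L^2$ and $L^{2,1}$ norms on characteristic functions.

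The proof of (4), however, has a genuine gap: the symmetry you invoke does not exist. From \eqref{wn}, on the bulk of $I$ one has $w_n(\om,\sigma,\cdot)=\om/p$, hence $w_n(\om,\sigma,\cdot)^{-1}=p/\om=\sigma$ there, whereas $w_n(\sigma,\om,\cdot)=\sigma/p=1/\om$ there; these differ by a factor $p$. More structurally, the averages of $w_n(\om,\sigma,\cdot)^{-1}$ over the generation-$\ell$ forming intervals decrease like $\sigma 3^{-\ell}$, while those of $w_n(\sigma,\om,\cdot)$ grow like $3^{\ell}\sigma$, and a fixed fraction (about $2/9$) of the $w_n^{-1}$-mass of each forming interval $K$ sits on the tiny special interval $K_{k-1}^{+}$. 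So neither Theorem \ref{Md} nor the computation of Section \ref{sqF} transfers ``verbatim'' with the roles of $w_n$ and $w_n^{-1}$ exchanged, and your two displayed estimates do not follow from anything established. Worse, the second display is false: because of the mass concentration on the special intervals, the Haar coefficients of $w_n^{-1}$ along a row grow geometrically (like $4^m\sigma 3^{-\ell}$), so the row sum is dominated by a single scale and no factor $k\asymp\log p$ appears; with $\om=1$, $\sigma=p$ one finds $\int_0^1 (Sw_n^{-1})^2 w_n\,dx\asymp p^3$, whereas $p^2\log p\cdot w_n^{-1}(I_0)\asymp p^3\log p$. Thus your test function loses exactly the logarithm, which is the entire content of the theorem (this asymmetry of the construction is also the theme of Theorems \ref{linThm} and \ref{chfunThm}). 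In addition, the bound $\|M^d\|_{L^2(w_n)}\le Ap$ that you need is nowhere proved and does not follow from Theorem \ref{Md}.

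The intended proof of (4) is much simpler and needs no new estimate: since $S_w g=S(wg)$ and $g\mapsto wg$ is an isometry of $L^2(w)$ onto $L^2(w^{-1})$, one has $\|S_{w_n}:L^2(w_n)\to L^2(w_n^{-1})\|=\|S:L^2(w_n^{-1})\to L^2(w_n^{-1})\|$, so claim (2) is literally claim (4) for the weights $u_n:=w_n^{-1}$ (and $[u_n]_{A_2^d}=[w_n]_{A_2^d}\to\infty$, $\|M^d\|_{u_n}=\|M^d\|_{L^2(w_n^{-1})}\asymp p$ by (1)). In other words, the paper proves (4) for the inverse weights $w_n^{-1}$, not for the same sequence $w_n$ as in (1)--(3); your literal reading of the statement asks for more than the paper establishes, and your computation above shows it cannot be obtained by the shortcut you propose.
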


\begin{proof}
The first  and the second claims were proved above, we just put $w=w_n$ and make parameter $p$ go to infinity. To see the third claim 
we just apply $S_{w}$, $w=w_n$, to one function $f=\1_{I_0}$ and use \eqref{Sbelow}. We have to observe also that for characteristic functions and any measure $\mu$, $L^2(\mu)$-norm and $L^{2,1}(\mu)$-norm coincide.

The fourth claim is just the second one applied to the inverse  $w=w_n^{-1}$. Of course this should be combined with the trivial remark that
$$
\|S_{w}: L^{2} (w) \to L^2(w^{-1})\| = \|S: L^{2} (w^{-1}) \to L^2(w^{-1})\|\,.
$$
\end{proof}

This theorem compares the rate norm of the weighted dyadic square function and weighted dyadic maximal function. We think that it gives the  new information on the relative strength of singularity of these operators.

 In terms of the comparison the  norm of the weighted dyadic square function with $A_2$ characteristic  $[w]_{A_2^{d}}$ the second claim does not give anything interesting because  by $(1)$
$$
 \|M^d\|_{w^{-1}}\sqrt{\log \|M^d\|_{w^{-1}}} \asymp  \sqrt{[w]_{A_2^{d}} \log [w]_{A_2^{d}}},
 $$
 and because $\|S_{w}: L^{2} (w) \to L^2(w^{-1})\|  \le [w]_{A_2^{d}}$, and this is known to be sharp.
 
 As we will see in Remark \ref{Nn} below $\|S_{w}: L^{2,1} (w) \to L^2(w^{-1})\| $ is typically  
 much smaller than $\|S_{w}: L^{2} (w) \to L^2(w^{-1})\| $. So in terms of the comparison the  norm 
 $\|S_{w}: L^{2,1} (w) \to L^2(w^{-1})\| $ with $A_2$ characteristic  $[w]_{A_2^{d}}$ the third 
 claim seems to be interesting, as it claims the existence of the sequence of weights with $[w]_{A_2^{d}}\to \infty$ and such that
 $$
\|S_{w}: L^{2,1} (w) \to L^2(w^{-1})\| \ge c  \sqrt{[w]_{A_2^{d}} \log [w]_{A_2^{d}}}\,.
$$
However, Remark \ref{Nn} below  also claims  that norms $\|S_{w}: L^{2,1} (w) \to L^2(w^{-1})\|$ and $\|S_{w}: L^{2} (w) \to L^2(w^{-1})\|$ are equivalent
as soon as the first one is $>>[w]_{A_2^d}^{1/2}$. This is exactly what happens in the case of weights $w=w_n$ built above and used in the proof of Theorem \ref{L21}.

\medskip

\begin{remark}
\label{Nn}
It seems reasonable to think that there exists an absolute constant $C$ such that
 $n\le N\le Cn$.
In fact, inequality $n\le N$ is obvious. On the other hand,
 \begin{align*}
& n\ge \sup_{I\in \cD} \|S_w \chi_I\|_{w^{-1}}/ \|\chi_I\|_{L^{2,1}(w)}=\\
& \sup_{I\in \cD} \|S_w \chi_I\|_{w^{-1}}/ \|\chi_I\|_{w}
\end{align*}
The latter quantity seems to be at least $\ge 
\frac1{C} \|S_w\|_{L^2(w)\to L^2(w^{-1})}=\frac1{C} N$ just by $T1$ theorem for dyadic square function operator.
In fact, such a $T1$ theorem for dyadic square function does not exist. 
 The  correct $T1$ theorem for dyadic square function \textup(see \cite{IVV1}\textup) looks as follows:
 \begin{equation}
 \label{T1SqF}
  \|S_w\|_{L^2(w)\to L^2(w^{-1})} \le C\max\big\{ \sup_{I\in \cD} \|S_w \chi_I\|_{w^{-1}}/ \|\chi_I\|_{w}, [w]_{A_2}^{1/2}\big\}\,.
  \end{equation}
  In our situation of $w=w_n$, where $w_n$ are the weights built above, the first term under the sign of {\it maximum} is dominating the second term. This in particular means that for $w_n$ built above the following holds:
  \begin{equation}
  \label{equiv21}
  \|S_{w_n}: L^{2} (w_n) \to L^2(w_n^{-1})\| \asymp \|S_{w_n}: L^{2,1} (w_n) \to L^2(w^{-1}_n)\| \asymp  \sqrt{[w_n]_{A_2^{d}} \log [w_n]_{A_2^{d}}}\,.
  \end{equation}
But we would like to repeat that the meaning of Theorem \ref{L21} is in comparison of $\|S\|_{w}$ with  $\|M\|_{w}$ and not with $[w]_{A_2^d}$.
  \end{remark}


\medskip

\subsection{A problem with duality} If the square function operator were a linear operator 
with symmetric or anti-symmetric kernel, then taking the adjoint, we would conclude from Theorem \ref{L21} that
\begin{equation}
\label{L2infty}
\|S_{w^{-1}_n}: L^{2} (w^{-1}_n) \to L^{2, \infty}(w_n)\| \ge c \sqrt{[w_n]_{A_2^{d}} \log [w_n]_{A_2^{d}}}\,.
\end{equation}
Now $f\to fw^{-1}$ is the isometry from $L^2(w^{-1})$ onto $L^2(w)$. Thus, \eqref{L2infty} would become
\begin{equation}
\label{L2}
\|S: L^{2} (w_n) \to L^{2,\infty}(w_n)\| \ge c \sqrt{[w_n]_{A_2^{d}} \log [w_n]_{A_2^{d}}}\,.
\end{equation}
This is precisely the missing estimate from below that would support the sharpness of the result of Domingo-Salazar--Lacey and Rey, \cite{DSaLaRey}.

Unfortunately, we do not know how to make this trick for square function operator. Henceforth, lines \eqref{L2infty}, \eqref{L2} are not proved.
They would complement very nicely the estimate of weak norm of the square function (from above) in Domingo-Salazar, Lacey, and Rey  \cite{DSaLaRey}.

\subsection{The estimate from above of $\|S_{w^{-1}_n}: L^{2} (w^{-1}_n) \to L^{2, \infty}(w_n)\| $ for weights built in the first sections}

Not only \eqref{L2infty} is not proved. It is, in fact, false for the weights $w_n$ built above, And, thus, an equivalent statement \eqref{L2}  is false too. We will show now that for $w=w_n$ built above one has an inequality quite opposite to the one we wish (that is \eqref{L2infty}). Namely,
\begin{equation}
\label{L2sqrt}
\|S_{w^{-1}_n}: L^{2} (w^{-1}_n) \to L^{2, \infty}(w_n)\| \le C\, \sqrt{[w_n]_{A_2^{d}}}\,.
\end{equation}
We will prove a more  general estimate now.

So below $w=w_n, p$ are those that has been constructed in the previous sections.
Consider operator   from $L^2(w^{-1})$
$$
\cS_{w^{-1}} f =\{(fw^{-1}, h_I)\frac{\chi_I(x)}{\sqrt{I}}\}_{I\in \cD}
$$
as an operator  from $L^2(w^{-1})$ to $L^{2, \infty}(\ell^2(\cD),w)$, which we want to estimate from below.

\bigskip

The pre-dual operator $S_w^*$ acts from $L^{2,1}(\ell^2(\cD), w)$ to $L^2(w^{-1})$. We would like to estimate its norm from below by
constructing a vector function 
$$
A(x):= \{a_I(x)\}_{I\in \cD}
$$
such that
\begin{equation}
\label{I00}
\sum_I a_I^2(x) = \chi_{I_0}\,,
\end{equation}
and
\begin{align}
\label{cSbelow}
\|\cS_{w}^* A\|^2_{w^{-1}} = \int_{I_0} \bigg|\sum_{I\in \cD} \sqrt{I}\La w\, a_I\Ra_I h_I(x)\bigg|^2 w^{-1}(x) dx \ge c \sqrt{[w]_{A_2^{d}} \log [w]_{A_2^{d}}}\, w (I_0)\,.
\end{align}

It is immediate that this inequality will bring \eqref{L2infty} (and equivalently \eqref{L2}).

\medskip

\subsection{However, \eqref{cSbelow} is impossible}
\label{imp}

\begin{remark}
The hope to prove \eqref{cSbelow} is futile.  In general, it seems like the approach used in \cite{LNO} and also used above to  estimate 
$$
\|\cS_w: L^{2,1}(w)\to L^{2}(w^{-1})\|
$$
from below  is not suitable for the estimate from below of $\|S_{w^{-1}}: L^2(w^{-1})\to L^{2, \infty}(w)\|=\|S: L^2(w) \to L^{2, \infty}(w)$.
\end{remark}

\medskip 

\begin{lemma}
\label{cA}
There exists a finite absolute constant $C$ such that for every vector function $A$ such that \eqref{I00} holds and for every weight $w$, such that
$$
[w]_{A_2^d} \asymp \|M^d\|_{w^{-1}}^2,
$$
we have
\begin{align}
\label{cSabove}
\|\cS_{w}^* A\|^2_{w^{-1}} = \int_{I_0} \bigg|\sum_{I\in \cD} \sqrt{I}\La w\, a_I\Ra_I h_I(x)\bigg|^2 w^{-1}(x) dx \le c \sqrt{[w]_{A_2^{d}}} \, w (I_0)\,.
\end{align}
\end{lemma}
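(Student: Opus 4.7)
The plan is to bound $\|F\|_{L^2(w^{-1})}$, where $F=\cS_w^*A=\sum_I\sqrt{|I|}\langle wa_I\rangle_I h_I$, by dualizing against $L^2(w)$ and using the weak-type information on the square function that was established as part~(4) of the main theorem. The identity $\sum_I a_I^2=\chi_{I_0}$ will enter twice: once to identify $|A|_{\ell^2}$ as an actual indicator (so that $\|A\|_{L^{2,1}(\ell^2,w)}=w(I_0)^{1/2}$), and once through a pointwise majorization by the dyadic maximal function of $w$.

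First I would expand the pairing: by Plancherel and the definition of $F$, for any $g\in L^2(w)$,
\[
\int Fg\,dx=\sum_{I\in\cD}\sqrt{|I|}\,\langle wa_I\rangle_I\,\hat g_I=\int_{I_0}\bigl\langle G(x),A(x)\bigr\rangle_{\ell^2}w(x)\,dx,
\]
where $G(x)=(\hat g_I\chi_I(x)/\sqrt{|I|})_I$ is the Haar square-function vector of $g$, so $|G|_{\ell^2}=Sg$, and $|A|_{\ell^2}=\chi_{I_0}$. Pointwise Cauchy--Schwarz in $\ell^2$ gives $|\langle G,A\rangle_{\ell^2}|\le Sg\cdot\chi_{I_0}$, after which $L^{2,1}(w)$--$L^{2,\infty}(w)$ duality produces
\[
\Bigl|\int Fg\,dx\Bigr|\le\int_{I_0}Sg\cdot w\,dx\le\|Sg\|_{L^{2,\infty}(w)}\,\|\chi_{I_0}\|_{L^{2,1}(w)}=\|Sg\|_{L^{2,\infty}(w)}\,w(I_0)^{1/2}.
\]

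Next I would plug in the key weak-type bound $\|S\colon L^2(w)\to L^{2,\infty}(w)\|\le C\|M^d\|_{w^{-1}}$ of part~(4) (the Domingo-Salazar--Lacey--Rey type estimate that is the whole point of the preceding discussion). Taking the supremum over $\|g\|_{L^2(w)}\le 1$ and squaring yields the baseline estimate
\[
\|F\|_{L^2(w^{-1})}^2\le C\,\|M^d\|^2_{w^{-1}}\,w(I_0),
\]
which, under the hypothesis $[w]_{A_2^d}\asymp\|M^d\|^2_{w^{-1}}$, is of size $[w]_{A_2^d}\,w(I_0)$.

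The main obstacle is the missing factor $\sqrt{[w]_{A_2^d}}$ between this naive bound and the claim. To close it one must use the special structure of $A$ beyond its $L^{2,1}$ norm, namely that $|A|_{\ell^2}$ is literally an indicator. A natural route is a direct Carleson-embedding computation on the Haar coefficients of $F$: from Cauchy--Schwarz on $I$ one has $c_I^2=|I||\langle wa_I\rangle_I|^2\le\langle w\rangle_I\int_I wa_I^2\,dx$, and from $\sum_I a_I^2(x)=\chi_{I_0}(x)$ one gets the pointwise domination $\sum_{I\ni x}\langle w\rangle_I a_I^2(x)\le M^dw(x)$, so summing and using Fubini,
\[
\sum_I c_I^2\le\sum_I\langle w\rangle_I\int_I wa_I^2\,dx\le\int_{I_0}w\cdot M^dw\,dx.
\]
Combining this with a Chang--Wilson--Wolff type passage from $F$ to $SF$ (legitimate since $w^{-1}\in A_\infty$ under the hypothesis), the problem reduces to controlling $\int_{I_0}w\cdot M^dw\,dx$ by $C\|M^d\|_{w^{-1}}w(I_0)=C\sqrt{[w]_{A_2^d}}\,w(I_0)$, via a weighted Cauchy--Schwarz splitting of $w\cdot M^dw$ across $L^2(w)$ and $L^2(w^{-1})$ together with the boundedness $\|M^dw\|_{L^2(w^{-1})}\le\|M^d\|_{w^{-1}}\sqrt{w(I_0)}$. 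This last step, where the hypothesis $[w]_{A_2^d}\asymp\|M^d\|_{w^{-1}}^2$ enters crucially, is the delicate part of the argument.
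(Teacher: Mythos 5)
Your first half is, in substance, the paper's own proof, just run directly instead of by contradiction. The paper's argument for Lemma \ref{cA} is: condition \eqref{I00} forces $\||A|_{\ell^2}\|_{L^{2,1}(w)}=w(I_0)^{1/2}$, hence by duality $\|\cS_w^*A\|_{w^{-1}}$ is at most the weak-type norm of Theorem \ref{linThm} times $w(I_0)^{1/2}$, and that norm is $\le C\|M^d\|_{w^{-1}}$; your pairing identity $\int Fg\,dx=\int\La G,A\Ra_{\ell^2}\,w\,dx$, the pointwise Cauchy--Schwarz, and the $L^{2,1}$--$L^{2,\infty}$ H\"older step are exactly this duality written out (and arguably more cleanly, since you avoid quoting the vector-valued adjoint). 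Both versions yield precisely $\|\cS_w^*A\|^2_{w^{-1}}\le C\|M^d\|^2_{w^{-1}}\,w(I_0)\asymp C\,[w]_{A_2^d}\,w(I_0)$, and no more.

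The factor $\sqrt{[w]_{A_2^d}}$ you then try to recover is not there to be recovered: the right-hand side of \eqref{cSabove} is out of step (square versus square root) with the squared norm on the left, the same slip that occurs in \eqref{cSbelow} --- note that \eqref{cSbelow} as literally written would not imply \eqref{L2infty} either. The consistent pair of statements is $\|\cS_w^*A\|_{w^{-1}}\le C\sqrt{[w]_{A_2^d}}\,w(I_0)^{1/2}$ against the hoped-for $\ge c\sqrt{[w]_{A_2^d}\log[w]_{A_2^d}}\,w(I_0)^{1/2}$, and your baseline bound already settles that comparison; the paper's contradiction argument likewise rules out nothing beyond growth past $\|M^d\|^2_{w^{-1}}\,w(I_0)$ for the squared norm. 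So the whole second half chases a strictly stronger claim than is intended or needed, and in any case it does not close: the coefficient estimate $\sum_I c_I^2\le\int_{I_0}w\,M^dw\,dx$ controls only the unweighted $L^2$ norm of $F$, the passage to $\|F\|_{L^2(w^{-1})}$ through $SF$ is not justified uniformly in the weight (the $A_\infty$ constants blow up along the relevant sequence), and the concluding inequality $\int_{I_0}w\,M^dw\,dx\le C\|M^d\|_{w^{-1}}\,w(I_0)$ is false for the very weights the lemma is about: for $w=w_n$ with $\omega=1$ one has $\int_{I_0}w\,M^dw\,dx\ge\int_{\supp w_0}w^2\,dx\gtrsim 3^{n}$ with $n=4^k$, while $\|M^d\|_{w^{-1}}\,w(I_0)\asymp p\asymp 4^k$. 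Keep your first paragraph as the proof and delete the rest.
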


\begin{proof}
Let us have the sequence of vector functions $A_n$ and weights $w_n\in A_2^d$ such that
\begin{align}
\label{cSbelow1}
\|\cS_{w}^* A_n\|^2_{w^{-1}} = \int_{I_0} \bigg|\sum_{I\in \cD} \sqrt{I}\La w\, a_I\Ra_I h_I(x)\bigg|^2 w^{-1}(x) / \left(\sqrt{[w_n]_{A_2^{d}}} \, w(I_0)\right)\to\infty\,.
\end{align}
If \eqref{cSbelow1} were true, then, by virtue  of $[w]_{A_2^d} \asymp \|M^d\|_{w^{-1}}^2$ we would have then that for the sequence of weights 
$$
\|S_{w^{-1}}: L^2(w^{-1})\to L^{2, \infty}(w) \| /\left( \|M^d\|_{w^{-1}} \right) \to \infty\,.
$$
But the last display formula  contradicts the following Theorem \ref{linThm}.

\end{proof}

\begin{theorem}
\label{linThm}
There exists an absolute constant $C$ such that or every weights $w$, the following holds
\begin{equation}
\label{lin}
\|S_{w^{-1}}: L^2(w^{-1})\to L^{2, \infty}(w)\|=\|S: L^2(w)\to L^{2, \infty}(w)\| \le C\|M^d\|_{w^{-1}}\,.
\end{equation}
\end{theorem}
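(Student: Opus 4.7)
The equality $\|S_{w^{-1}}\colon L^2(w^{-1})\to L^{2,\infty}(w)\| = \|S\colon L^2(w)\to L^{2,\infty}(w)\|$ is immediate from the isometric isomorphism $T\colon L^2(w^{-1})\to L^2(w)$ defined by $Tg := gw^{-1}$: one has $\|Tg\|_{L^2(w)} = \|g\|_{L^2(w^{-1})}$ and, by the very definition of $S_{w^{-1}}$, $S(Tg) = S(gw^{-1}) = S_{w^{-1}}g$, while the target $L^{2,\infty}(w)$ is unchanged. It therefore suffices to prove $\|S\colon L^2(w)\to L^{2,\infty}(w)\|\le C\,M_w$, where $M_w := \|M^d\|_{w^{-1}}$.

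Fix $f\in L^2(w)$ with $\|f\|_{L^2(w)} = 1$ and $\lambda > 0$; the aim is $w\{Sf>\lambda\}\le CM_w^2/\lambda^2$. I would begin with the Calder\'on--Zygmund exceptional set
\[
\Omega \;:=\; \{x : M^d(fw)(x) > \lambda\, w(x)\}.
\]
Chebyshev's inequality and the definition of $M_w$ give
\begin{align*}
w(\Omega) &\le \int \Bigl(\frac{M^d(fw)}{\lambda\, w}\Bigr)^{\!2} w\,dx \;=\; \frac{1}{\lambda^2}\|M^d(fw)\|_{L^2(w^{-1})}^2 \\
&\le\; \frac{M_w^2}{\lambda^2}\|fw\|_{L^2(w^{-1})}^2 \;=\; \frac{M_w^2}{\lambda^2},
\end{align*}
which disposes of the exceptional set.

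Off $\Omega$, my plan is a corona (stopping-time) decomposition. Let $\mathcal{J}$ be the maximal dyadic intervals on which $\langle fw\rangle_J > \lambda\,\langle w\rangle_J$ --- exactly the maximal components of $\Omega$. I split the Haar expansion of $f$ according to the tree $\mathcal{J}$: between two consecutive generations of stopping intervals, the weighted averages $\langle fw\rangle_I/\langle w\rangle_I$ do not exceed $\lambda$, and Cauchy--Schwarz yields
\[
\langle f\rangle_I^2 \;=\;\Bigl(\frac{1}{|I|}\int_I (fw)\,w^{-1}\,dx\Bigr)^{2}\;\le\;\langle f^2 w\rangle_I\,\langle w^{-1}\rangle_I .
\]
Multiplying by $\langle w\rangle_I\,|I| = w(I)$ and summing within a corona converts $\int_{\Omega^c}(Sf)^2\,w$ into sums of the form $\sum_{I\subset J}\langle f^2 w\rangle_I\,\langle w\rangle_I\,\langle w^{-1}\rangle_I\,|I|$; the Sawyer-type testing characterization of the $L^2(w^{-1})$-boundedness of $M^d$ bounds $\sum_{I\subset J}\langle w\rangle_I\,\langle w^{-1}\rangle_I\,|I|$ by $CM_w^2\,w(J)$, and a Carleson embedding handles the remaining factor $\langle f^2 w\rangle_I$. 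A final Chebyshev inequality delivers $w\{x\in\Omega^c:\,Sf(x)>C\lambda\}\le CM_w^2/\lambda^2$, completing the argument.

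\textbf{The main obstacle} is extracting the factor $M_w^2$ --- rather than the coarser $[w]_{A_2}$ --- from the Carleson sum $\sum_{I\subset J}\langle w\rangle_I\,\langle w^{-1}\rangle_I\,|I|$. A naive termwise estimate via the pointwise bound $\langle w\rangle_I\,\langle w^{-1}\rangle_I\le[w]_{A_2}$ is wasteful and would only reproduce the strong-type constant $[w]_{A_2}$. What is really needed is the Sawyer--Hyt\"onen--P\'erez testing characterization of $\|M^d\|_{w^{-1}}$, which encodes the $A_\infty$ cancellation distinguishing $M_w^2$ from $[w]_{A_2}$. An equivalent route, avoiding explicit Sawyer testing, is to invoke a Conde-Alonso--Rey type sparse domination for the square function together with a weak-type bound for sparse operators phrased directly in terms of $M_w$.
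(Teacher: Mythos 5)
Your reduction via the isometry $g\mapsto gw^{-1}$ and your Chebyshev estimate of the exceptional set are fine, but the core of the argument has a genuine gap. The inequality you attribute to Sawyer-type testing, $\sum_{I\subset J}\langle w\rangle_I\langle w^{-1}\rangle_I|I|\le C\,\|M^d\|_{w^{-1}}^2\,w(J)$, is false: already for $w\equiv 1$ the left-hand side is $\sum_{I\subset J}|I|=\infty$. The testing condition that actually characterizes $\|M^d\|_{L^2(w^{-1})\to L^2(w^{-1})}$ is $\int_J (M^d(\chi_J w))^2 w^{-1}\,dx\le C\|M^d\|^2_{w^{-1}}w(J)$ (this is exactly \eqref{MT1} in the paper), and since $M^d(\chi_Jw)(x)=\sup_{I\ni x,\,I\subset J}\langle w\rangle_I$ it controls a supremum along the tower of intervals containing $x$, never a sum over all $I\subset J$. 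The divergence is created one step earlier, when you replace the Haar coefficients $(f,h_I)$ by averages $\langle f\rangle_I$ so as to dominate $\int_{\Omega^c}(Sf)^2w$ by $\sum_I\langle f^2w\rangle_I\langle w\rangle_I\langle w^{-1}\rangle_I|I|$: this discards precisely the martingale-difference cancellation that makes $S$ bounded at all, and no Carleson embedding recovers it. Two further (smaller) problems: $\Omega=\{M^d(fw)>\lambda w\}$ is not a union of dyadic intervals, so its ``maximal components'' are not the stopping intervals defined by $\langle fw\rangle_J>\lambda\langle w\rangle_J$ (the latter form the level set of the weighted dyadic maximal operator, a different object); and restricting the integral to $\Omega^c$ without a good/bad splitting of $f$ does not remove the influence of the values of $f$ inside the stopping intervals on the surviving Haar coefficients. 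Your closing alternative (sparse domination plus a weak-type bound ``phrased directly in terms of $M_w$'') is only a pointer: the known sparse weak-type estimates are stated in terms of $[w]_{A_2}^{1/2}$ with logarithmic $A_\infty$ corrections, and extracting the constant $\|M^d\|_{w^{-1}}$ from them is exactly what would have to be proved.

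For comparison, the paper's proof avoids any decomposition of $Sf$: for a suitable $g$ with $\|g\|_{w^{-1}}=1$ one forms the Rubio de Francia majorant $\cR g=\sum_{k\ge 0}(M^d)^kg/(2\|(M^d)^k\|_{L^2(w^{-1})})$, which satisfies $g\le \cR g$, $\|\cR g\|_{w^{-1}}\le 2$ and $[\cR g]_{A_1^d}\le \|M^d\|_{w^{-1}}$; then the known weak $(1,1)$ bound for the square function, linear in the $A_1$ characteristic (Wilson), is applied with the weight $\cR g$, and the duality computation \eqref{cR} turns this into $\alpha\,(w\{S_{w^{-1}}F>\alpha\})^{1/2}\le C\|M^d\|_{w^{-1}}\|F\|_{w^{-1}}$, which is \eqref{lin}. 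If you want a direct stopping-time proof along your lines, you would need at minimum a weighted good/bad decomposition and an argument that genuinely exploits the Haar cancellation; as it stands, the proposal does not establish the theorem.
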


\bigskip

\begin{remark}
Now it is clear why \eqref{cSbelow} cannot be true. The reason is the combination two facts proved for weights $w_n$ constructed above:
\begin{equation}
\label{Qp2eq}
1) \,\,[w_n]_{A_2^d} \asymp p^2\to \infty,\quad 2)\,\, \|M\|_{w_n^{-1}} \asymp p\,.
\end{equation}
\end{remark}



\medskip

The proof of Theorem \ref{linThm} is well-known. But we prove it here for the convenience of the reader.

\begin{proof}
Recall that for a function $g\in L^2(w^{-1})$
$$
\cR g :=\sum_{k=0}^\infty\frac{(M^d)^k g}{2\|(M^d)^k\|_{L^2(w^{-1})}}\,.
$$
From \eqref{cR} applied to $T=S$ we have for appropriate function $g$, $\|g\|_{w^{-1}} =1$, and any $F\in L^2(w^{-1})$
\begin{align*}
 \alpha(w\{S_{w^{-1}} F > \alpha\})^{1/2} \le 4 N(S, [\cR g]_{A_1^d}) \|F\|_{w^{-1}}\,,
 \end{align*}
where $N(S, [\cR g]_{A_1^d}) $ denotes the estimate from above of the weak norm $S: L^1(Rg)\to L^{1,\infty}(Rg)$. But it is well-known, see \cite{W}, page 39,  that
$$
N(S,[W]_{A_1^d})  \le C\, [W]_{A_1^d}\,,
$$
and from the definition of $\cR g$ it follows that
\begin{equation}
\label{Rg1}
[Rg]_{A_1^d} \le \|M^d : L^2(w^{-1})\to L^2(w^{-1})\|\,.
\end{equation}

Thus,
\begin{align}
\label{cRS}
 \alpha(w\{S_{w^{-1}} F > \alpha\})^{1/2} \le C \|M^d\|_{w^{-1}}\|F\|_{w^{-1}}\,,
 \end{align}
which is the claim of Theorem \ref{linThm}.

 \end{proof}
 
 \begin{remark}
 It is interesting to compare  two facts: one is that for all weights $w$ one has
 $$
 \|S_{w^{-1}}: L^2(w^{-1})\to L^{2, \infty}(w)\|=\|S: L^2(w)\to L^{2, \infty}(w)\| \le C\|M^d\|_{w^{-1}}\,,
 $$
 and another  fact is that there exists a sequence of weights with $ \|M^d\|_w$ tending to infinity such that
 $$
 \|S_w\|_{L^{2, 1}(w)\to L^2(w^{-1})}\ge  c \|M^d\|_{w^{-1}}\sqrt{\log \|M^d\|_{w^{-1}}}\
 $$
 The last line is just the paraphrase of Theorem  \ref{SM} if one takes Lemma \ref{Nn} into account.
 \end{remark}


\section{The proof of theorem \ref{Md}}
\label{MAX}

The proof  follows directly the steps of \cite{LNO}. It only is easier because the statement is dyadic.
We put initial numbers $\omega$, $\sigma$ to be
\begin{equation}
\label{initial}
\omega=1, \, \sigma=p\,.
\end{equation}

It is well known that for maximal function one has $T1$ theorem. Hence it is sufficient to check that with finite absolute constant $C$
\begin{equation}
\label{MT1}
\forall J \in \cD(I)\,\,\int_JM^d (w\chi_J)w^{-1} dx \le Cp^2 w(J)\,.
\end{equation}

Following \cite{LNO} define a function (of course we put $\omega=1$, but it is convenient to keep writing it):
\begin{equation}
\label{tilde}
\tilde w(x) :=\omega\sum_{\ell=1}^n 3^{\ell-1} \chi_{\supp w_{n-\ell+1}\setminus \supp w_{n-\ell}} + \omega 3^{n} \chi_{supp\, w_0}\,.
\end{equation}

\begin{lemma}
\label{maj}
With an absolute constant $C$, 
$M^d w\le C \tilde w$.
\end{lemma}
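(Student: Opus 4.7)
My plan is to establish the pointwise bound $M^d w(x)\le C\tilde w(x)$ for every $x\in I_0$ by exploiting the nested forming-interval structure built into the recursion \eqref{wn}. Fix $x$ and let $T_0=I_0\supset T_1\supset\cdots\supset T_{j_0}$ be the maximal chain of forming intervals containing $x$, where $T_j$ is the dyadic sub-interval on which the recursion places a copy of $w_{n-j}(3^j\omega,\sigma/3^j,\cdot)$. By construction $x\in\supp w_{n-j_0}\setminus\supp w_{n-j_0-1}$ when $j_0<n$ and $x\in\supp w_0$ when $j_0=n$, so in either case $\tilde w(x)=3^{j_0}\omega$. It therefore suffices to show $\langle w\rangle_J\le C\cdot 3^{j_0}\omega$ for every dyadic $J\ni x$ contained in $I_0$.

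For any such $J$, let $j^*\le j_0$ be the largest index with $J\subset T_{j^*}$. Lemma \ref{av} gives $\int_{T_{j^*}}w=3^{j^*}\omega|T_{j^*}|$, but the naive bound $\langle w\rangle_J\le (|T_{j^*}|/|J|)\,3^{j^*}\omega$ is too weak because the forming intervals at depth $j^*+1$ can be as small as $4^{-(k-1)}|T_{j^*}|$. The heart of the argument is therefore to analyze $w$ inside $T_{j^*}$, where it equals $w_{n-j^*}(3^{j^*}\omega,\sigma/3^{j^*},T_{j^*})$. Using the decomposition $T_{j^*}=\bigcup_{m=0}^{k-2}(I_m^-\cup I_m^{+-})\cup I_{k-1}^-\cup I_{k-1}^+$, the weight equals $\tfrac{3^{j^*}\omega}{p}$ on every $I_m^-$ and on $I_{k-1}^-$, equals $\tfrac{3^{j^*}\omega\tau}{p}$ on $I_{k-1}^+$, while each $I_m^{+-}$ is a forming interval of $w_{n-j^*-1}$ on which $\langle w\rangle=3^{j^*+1}\omega$ by Lemma \ref{av}.

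The key computation exploits $I_m=I_m^-\cup I_m^{+-}\cup I_{m+1}$ to set up the recurrence
\[ a_m \;:=\; \langle w\rangle_{I_m} \;=\; \frac{3^{j^*}\omega}{2p}+\frac{3^{j^*+1}\omega}{4}+\frac{a_{m+1}}{4}, \qquad a_{k-1}=\frac{3^{j^*}\omega(1+\tau)}{2p}. \]
A backward induction on $m$ shows $a_m\le 3^{j^*}\omega$ for all $m$: the fixed point of $a\mapsto \tfrac{3^{j^*+1}\omega}{4}+\tfrac{a}{4}$ is exactly $3^{j^*}\omega$ and the iteration starts from the small value $a_{k-1}\ll 3^{j^*}\omega$. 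The same calculation gives $\langle w\rangle_{I_m^+}\le 2\cdot 3^{j^*}\omega$, $\langle w\rangle_{I_m^{+-}}=3^{j^*+1}\omega$, and $\langle w\rangle_{I_m^-}=\tfrac{3^{j^*}\omega}{p}$. When $j^*=j_0<n$, the point $x$ lies in the background of $T_{j_0}$ (not in any $I_m^{+-}$), so the dyadic ancestors of $x$ inside $T_{j_0}$ are confined to one of the forms $T_{j_0}$, $I_m$, $I_m^+$, $I_m^-$, $I_{m}^{++}=I_{m+1}$, or a dyadic sub-interval of some $I_m^-$ or $I_{k-1}^+$, each of which has average $\le 3\cdot 3^{j^*}\omega$. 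For $j^*<j_0$, the same analysis applied at depth $j^*$ inside $T_{j^*}$ bounds $\langle w\rangle_J$ by $3\cdot 3^{j^*}\omega\le 3^{j_0}\omega=\tilde w(x)$.

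The remaining case $j^*=j_0=n$ is straightforward: on $T_n$ the weight equals $w_0(3^n\omega,\sigma/3^n,T_n)$, which is pointwise bounded by $\tfrac{3^n\omega}{\sqrt p}(\sqrt p+\sqrt{p-1})\le 2\cdot 3^n\omega$, so every dyadic average inside $T_n$ is $\le 2\cdot 3^n\omega=2\tilde w(x)$. Combining all cases yields $M^d w(x)\le C\tilde w(x)$ with an absolute $C$. The main obstacle is the backward induction on the $a_m$: the crude bound from $\int_{T_{j^*}}w$ blows up for small $J$, and one must exploit the specific geometric recursion inside $T_{j^*}$ (which is precisely what makes the fixed point of the averaging equation land exactly at $3^{j^*}\omega$) to prevent any dyadic $J\ni x$ from concentrating the mass.
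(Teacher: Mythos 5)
Your argument is correct in substance and rests on the same two ingredients as the paper's proof, namely Lemma \ref{av} (the average of $w$ over a forming interval of depth $j$ equals $3^{j}\omega$) and the nestedness of dyadic intervals, but the execution is different. The paper neither enumerates the dyadic ancestors of $x$ nor solves a recurrence: for $x\in\supp w_{n-\ell+1}\setminus\supp w_{n-\ell}$ it observes that $w\le 3^{\ell-1}\omega/p$ everywhere off $\supp w_{n-\ell}$, and that any forming interval of $\supp w_{n-\ell}$ which meets a dyadic $J\ni x$ must be entirely contained in $J$ (indeed $J$ cannot sit inside such an interval, because $x\notin\supp w_{n-\ell}$), so that $w(J)\le 3^{\ell}\omega\,|J\cap \supp w_{n-\ell}|+\frac{3^{\ell-1}\omega}{p}\,|J|\le 2\cdot 3^{\ell}\omega\,|J|$; this settles an arbitrary $J$ in one stroke. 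Your route instead classifies $J$ by the deepest forming interval $T_{j^*}$ containing it, lists the possible ancestors $I_m$, $I_m^{+}$, $I_m^{-}$ of $x$ (or of $T_{j^*+1}$) inside $T_{j^*}$, and runs a backward recurrence for $a_m=\La w\Ra_{I_m}$; this costs more bookkeeping but yields explicit, nearly sharp values of all the relevant averages. One small correction: because of the background term $\frac{3^{j^*}\omega}{2p}$, the fixed point of your recurrence is $3^{j^*}\omega\bigl(1+\frac{2}{3p}\bigr)$ rather than $3^{j^*}\omega$, so the claim $a_m\le 3^{j^*}\omega$ can fail by a factor $1+O(1/p)$; the identical induction gives $a_m\le 2\cdot 3^{j^*}\omega$ and $\La w\Ra_{I_m^{+}}\le \tfrac{5}{2}\,3^{j^*}\omega$, which is all you actually use, so the absolute constant $C$ in the lemma is unaffected.
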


\begin{proof}
If $x\in \supp \, w_0$ we have (taking into account the normalization in \eqref{initial})
$$
w(x) =w_0(3^n\omega, \frac{\sigma}{3^n}; x) \le \frac{3^{n}}{\sqrt{p}} \left( (\sqrt{p} -\sqrt{p-1}) \chi_{I_-} +  (\sqrt{p} +\sqrt{p-1}) \chi_{I_+}\right)\le 2\cdot 3^n\,.
$$
On the complement of $\supp w_0$, $w\le \frac{3^n}{p}$.  So the claim of lemma is obvious for $x\in \supp w_0$.

If $x \in \supp w_{n-\ell+1}\setminus \supp w_{n-\ell}$, $\ell =1, 2, \dots, n$, then 
$$
w(x) \le \frac{3^{\ell-1}\om}{p},
$$
and outside of $\supp w_{n-\ell+1}$, $w(x)  \le \frac{3^{\ell-2}\om} {p}$. If we average $w$ 
over a dyadic interval  $J$ centered at $x \in \supp w_{n-\ell+1}\setminus \supp w_{n-\ell}$,  
and  if this $J$ intersects dyadic intervals forming $\supp w_{n-\ell}$, then each of this 
dyadic interval $I$ is inside $J$. Then, by Lemma \ref{av},
for each such $I$ we have $w(I) = 3^\ell |I|$. Combining all this together we get
$$
w(J) \le  2\cdot 3^\ell |J|\,,
$$
which proves the lemma.

\end{proof}

\begin{lemma}
\label{wnI0}
Let $I=[0,1]$ and $I_0= I, I_1= I^{++}, I_2= I_1^{++},\dots, I_{k-1}= I_{k-2}^{++}$ as in  Section \ref{spI}. Let $w=w_n$.
Then with a finite absolute constant $C$ we have
$$
\int_{I_0} \tilde w^2 w^{-1}\, dx \le Cp^2 w(I_0)\,.
$$
\end{lemma}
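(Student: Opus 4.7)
The plan is to use the stratification $I_0 = \supp w_0 \sqcup \bigsqcup_{\ell=1}^n E_\ell$ with $E_\ell := \supp w_{n-\ell+1}\setminus \supp w_{n-\ell}$, on each piece of which $\tilde w$ is constant. By Lemma~\ref{av} applied with $\omega=1$ we have $w(I_0)=1$, so the claim reduces to proving $\int_{I_0}\tilde w^2 w^{-1}\,dx \le Cp^2$.

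On $E_\ell$ the function $\tilde w$ equals $3^{\ell-1}$. Each iteration of the recursion \eqref{wn} shrinks the forming support by the factor $(1-4\eps)/3$, so $|\supp w_{n-\ell+1}|\asymp \bigl(\tfrac{1-4\eps}{3}\bigr)^{\ell-1}$. Within a forming interval at depth $\ell-1$, the non-forming complement (which is precisely what makes up $E_\ell$) carries two values of $w$: the value $3^{\ell-1}/p$ on relative measure $\tfrac{2}{3}(1-\eps)$ and the value $3^{\ell-1}\tau/p$ on relative measure $2\eps$. Collecting the contributions,
\[
\int_{E_\ell}\tilde w^2 w^{-1}\,dx \;\le\; 3^{\ell-1}p\cdot \bigl(\tfrac{1-4\eps}{3}\bigr)^{\ell-1}\Bigl[\tfrac{2}{3}(1-\eps) + \tfrac{2\eps}{\tau}\Bigr] \;\le\; C\, p\,(1-4\eps)^{\ell-1},
\]
using $2\eps/\tau \asymp 1$ from \eqref{3}. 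Summing the geometric series, $\sum_{\ell=1}^{n}\int_{E_\ell}\tilde w^2 w^{-1}\,dx \le Cp/(4\eps) \le C p^2$ by $1/\eps \asymp p$ from \eqref{p}.

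On the innermost stratum $\supp w_0$, where $\tilde w = 3^n$, the base-case formula gives $w = \frac{3^n}{\sqrt p}(\sqrt p\pm\sqrt{p-1})$, so $w\asymp 3^n$ on half of $\supp w_0$ and $w\asymp 3^n/p$ on the other half; since $|\supp w_0|\asymp 3^{-n}$, a direct estimate yields $\int_{\supp w_0}\tilde w^2 w^{-1}\,dx \le Cp$, which is absorbed into $Cp^2$. The main obstacle is keeping the geometric factors straight: the cancellation between the $3^{\ell-1}$ coming from $\tilde w^2 w^{-1}$ and the $3^{-(\ell-1)}$ coming from the shrinking of the forming support leaves a summable geometric series in $(1-4\eps)$, which is what makes the total of order $p/\eps \asymp p^2$ rather than something like $np$; beyond that, the essential inputs are $\tau\asymp\eps$ and $1/\eps\asymp p$.
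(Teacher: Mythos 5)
Your proof is correct and follows essentially the same route as the paper: both stratify $I_0$ into the generation sets $\supp w_{n-\ell+1}\setminus\supp w_{n-\ell}$ together with $\supp w_0$, and use the explicit values of $w$ and $\tilde w$ there plus $\tau\asymp\eps$ and $1/\eps\asymp p$. The only difference is bookkeeping: the paper splits each stratum into its special intervals and the rest, using $\tilde w = p\,w$ off the special set (so those terms sum to $p^2 w(I_0)$) and the count $n=4^k$ for the special part, whereas you handle each stratum in a single computation and sum the geometric series $\sum_\ell (1-4\eps)^{\ell-1}\lesssim 1/\eps\asymp p$; both yield the bound $Cp^2 w(I_0)$.
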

\begin{proof}
Let us consider first $m=0$.  Denote
$$
F_j= [0,1] \cap (\supp w_{n-j} \setminus \supp w_{n-j-1})\,.
$$
Let $A_j$ be the union of all special intervals  (see Section \ref{spI}) containing in $F_j$.

On the complement of $\supp w_{n-j-1}$, $w(x) \le \frac{3^j\om}{p}$. But in the points of this complement, which are not in $A_j$, one has  
$w(x) = \frac{3^j\om}{p}$. Hence, on such points $\tilde w(x) = p w(x)$. This is convenient, because together with $w(x)w^{-1}(x)=1$, this implies
$$
\int_{F_j\setminus A_j}  \tilde w^2 w^{-1} dx = p^2 \int_{F_j\setminus A_j}   w(x) dx = p^2 w(F_j\setminus A_j)\,.
$$

On $A_j$, $w^{-1}(x) = \frac{p}{3^j}\frac{1+5\eps}{9\eps}$. We saw in \eqref{A} that
$$
|A_j| =\left(\frac13(1-\frac{1}{4^{k-2}})\right)^j\frac2{4^k}\le \frac1{3^j} \frac2{4^k}\,.
$$
By definition of $\tilde w$ we conclude now
$$
\int_{\cup_{j=0}^{n-1} A_j} \tilde w^2 w^{-1} dx \le p\sum_{j=0}^{n-1} 3^{2j} \frac1{3^{2j}}\frac{2}{4^k}\frac{1+5\eps}{9\eps} \le \frac{n \,p}{4^k \eps}\,.
$$
As $n=4^k$ and $p\asymp \eps^{-1}$, we get by using Lemma \ref{av} in the last inequality
$$
\int_{\cup_{j=0}^{n-1} A_j} \tilde w^2 w^{-1} dx \le  \frac{n \,p}{4^k \eps}\le C p^2 w(I)\,.
$$
Therefore,
\begin{align*}
&\int_I \tilde w^2 w^{-1} dx \le \int_{\cup_{j=0}^{n-1} F_j\setminus A_j} \tilde w^2 w^{-1} dx + \int_{\cup_{j=0}^{n-1} A_j} \tilde w^2 w^{-1} dx +
\int_{\supp w_0} \tilde w^2 w^{-1} dx  \le \\
&  p^2 \sum_{j=0}^{n-1}w(F_j\setminus A_j) + Cp^2 w(I) +\int_{\supp w_0} \tilde w^2 w^{-1} dx  \le C p^2 w(I) + \int_{\supp w_0} \tilde w^2 w^{-1} dx 
\end{align*}
On $\supp w_0$, $\tilde w(x) = 3^n = w(x)$, hence the last integral is just at most $w(I)$. Finally we get ($I_0=I= [0,1]$)
\begin{equation}
\label{I0}
\int_{I_0} \tilde w ^2 w^{-1} dx \le Cp^2 w(I_0)=Cp^2 w(I)\,.
\end{equation}

\end{proof}

\medskip

\begin{lemma}
\label{wnIm}
Let $I=[0,1]$ and $I_0= I, I_1= I^{++}, I_2= I_1^{++},\dots, I_{k-2}= I_{k-3}^{++}$ as in  Section \ref{spI}. Let $w=w_n$.
Then with a finite absolute constant $C$ we have for $m=1, \dots, k-2$
$$
\int_{I_m} \tilde w^2 w^{-1}\, dx \le Cp^2 w(I_m)\,.
$$
\end{lemma}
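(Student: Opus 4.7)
The plan is to mimic the proof of Lemma~\ref{wnI0} with $I_0$ replaced by $I_m$. We decompose
\begin{multline*}
\int_{I_m}\tilde w^2 w^{-1}\,dx\ \le\ \sum_{j=0}^{n-1}\int_{(F_j\setminus A_j)\cap I_m}\tilde w^2 w^{-1}\,dx\\
+\ \sum_{j=0}^{n-1}\int_{A_j\cap I_m}\tilde w^2 w^{-1}\,dx\ +\ \int_{\supp w_0\cap I_m}\tilde w^2 w^{-1}\,dx.
\end{multline*}
The first sum is immediate: on $F_j\setminus A_j$ one still has $\tilde w = pw$, so it equals $p^2 w(I_m\setminus\bigcup_jA_j)\le p^2 w(I_m)$, exactly as in Lemma~\ref{wnI0}. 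For the third piece, on each deepest forming interval $I'\subset\supp w_0$ the original computation gives $\int_{I'}\tilde w^2 w^{-1}\,dx\asymp p\,w(I')$, hence this term is at most $p\,w(\supp w_0\cap I_m)\le p\,w(I_m)\le p^2 w(I_m)$.

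The genuinely new estimate is the one on the specials, and it rests on the geometric claim
\[
|A_j\cap I_m|\ \le\ \frac{C}{4^m}\,|A_j|\qquad\text{for }j\ge 1,
\]
while $|A_0\cap I_m|=|A_0|=2/4^k$ trivially, since for $m\le k-2$ the unique top-level special $I_{k-1}^{+}$ is contained in $I_m$. Since $\tilde w^2 w^{-1}$ is constant on each level-$j$ special and Lemma~\ref{wnI0} gives the per-level identity $\int_{A_j}\tilde w^2 w^{-1}\,dx\asymp p/(\eps\,4^k)$, summing yields
\[
\sum_{j=0}^{n-1}\int_{A_j\cap I_m}\tilde w^2 w^{-1}\,dx\ \le\ \frac{Cp}{\eps\,4^k}+\frac{Cnp}{\eps\,4^{k+m}}\ \asymp\ \frac{p^2}{4^m}\ \asymp\ p^2 w(I_m),
\]
after using $n\asymp 4^k$, $p\asymp 1/\eps$, and $w(I_m)\asymp 4^{-m}$ (the last from $w(I_m)\ge w(I_m^{+-})=3/4^{m+1}$ together with the matching upper bound obtained by adding the constant contributions on the $I_j^{\pm}$).

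To establish the geometric claim we iterate on level. A level-$1$ forming interval $I_{m'}^{+-}$ lies in $I_m$ if and only if $m'\ge m$, since for $m'<m$ the intervals $I_{m'}^{+-}$ and $I_{m'+1}\supset I_m$ are the two disjoint halves of $I_{m'}^{+}$. Thus level-$1$ forming intervals inside $I_m$ have total length $\sum_{m'\ge m}4^{-m'-1}\asymp 4^{-m}/3$, versus a full total of $\asymp 1/3$, giving the ratio $4^{-m}$. Every deeper level is an exact self-similar copy of the construction inside one of these $I_{m'}^{+-}\subset I_m$, so captures the same $1/3$-fraction at each subsequent recursion step; the ratio $4^{-m}$ therefore persists to level $j$. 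Since each special is $2\eps$ of its containing level-$j$ forming interval, $|A_j\cap I_m|\le C|A_j|/4^m$ follows.

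The main obstacle is the geometric claim above; once it is in place, the rest is a direct line-by-line transcription of the argument in Lemma~\ref{wnI0}.
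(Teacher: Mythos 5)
Your proof is correct, but it takes a different route from the paper's. The paper decomposes $I_m=\bigl(\bigcup_{j=m}^{k-2}(I_j\setminus I_{j+1})\bigr)\cup I_{k-1}$: on each annulus the part lying in $\supp w_n\setminus\supp w_{n-1}$ satisfies $\tilde w=pw$, the forming interval $F=I_j^{+-}$ is handled by invoking Lemma \ref{wnI0} as a black box through scale invariance (on $F$ the weight is an exact copy of the construction with $n-1$ steps and starting value $3\om$, so $\int_F\tilde w^2w^{-1}\,dx\le Cp^2w(F)$), and the leftover $I_{k-1}$ is a short direct estimate using $\tilde w=\om$ and $w^{-1}\le\frac{(1+5\eps)p}{9\eps\om}$ there together with $w(I_m)\ge c\,\om|I_{k-1}|$. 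You instead re-run the layer decomposition of Lemma \ref{wnI0} (the sets $F_j\setminus A_j$, $A_j$, $\supp w_0$) restricted to $I_m$, and the genuinely new ingredients are the counting estimate $|A_j\cap I_m|\le C\,4^{-m}|A_j|$ for $j\ge 1$ and the two-sided bound $w(I_m)\asymp\om\,4^{-m}$; your proof of the counting estimate (level-one forming intervals $I_{m'}^{+-}$ lie in $I_m$ exactly when $m'\ge m$, and deeper levels occupy the same fixed fraction of every $I_{m'}^{+-}$ by self-similarity) uses the same self-similarity the paper exploits, only deployed to count specials rather than to cite Lemma \ref{wnI0} on sub-copies. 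Both arguments are sound; the paper's is shorter, while yours is more explicit and in fact more precise on the $\supp w_0$ piece (its true size is $\asymp p\,w(\supp w_0\cap I_m)$, which your bound records correctly). Two implicit points worth stating in a final write-up: bounding that piece by $p\,w(\supp w_0\cap I_m)$ uses that each deepest forming interval meeting $I_m$ is entirely contained in $I_m$ (true, since it lies inside some $I_{m'}^{+-}$, which is either inside $I_m$ or disjoint from it), and the absorption of the $j=0$ term, of size $\asymp p$, into $p^2 4^{-m}$ is exactly where the hypothesis $m\le k-2$ (hence $4^m\lesssim p$) enters.
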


\begin{proof}

Now we consider the case of $I_m$, $I=[0,1]$, $m=1,\dots, k-2$.
Notice that $I_m\setminus I_{m+1}$ consists of an interval $F:= I_{m}^{+-}$, which is one of the intervals forming $w_{n-1}$, and of 
interval $G$, such that $G$ belongs to $\supp w_n\setminus \supp w_{n-1}$. On such intervals, by the definition  \eqref{tilde} of $\tilde w$,  $\tilde w = \omega$, and $w=\frac{\omega}{p}$ (of course we can remember that $\omega$ is normalized in \eqref{initial}, but it does not matter in the calculations below). Thus on $G$, $\tilde w= p w$. 

Hence
$$
\int_{I_m\setminus I_{m+1}} \tilde w^2 w^{-1} dx \le p^2\int_G w^2w^{-1} dx + \int_{F} \tilde w^2 w^{-1} dx \le p^2 w(I) + \int_{F} \tilde w^2 w^{-1} dx\,.
$$

Notice that we can estimate the last integral by Lemma \ref{wnI0}. In fact, interval $F$ plays the role of $I$, and weight $w$ on $F$ (and so $\tilde w$) is constructed exactly as  $w=w_n$ on $I$, only it starts not with $\omega$ but with $3\omega$ and takes $n-1$ steps to be constructed. By scale invariance, we get by using Lemma \ref{wnI0}
$$
\int_{F} \tilde w^2 w^{-1} dx \le C p^2w(F) \le Cp^2 w(I)\,.
$$
Together two last display inequalities give
\begin{equation}
\label{Imm1}
\int_{I_m\setminus I_{m+1}} \tilde w^2 w^{-1} dx  \le (C+1)p^2 w(I_m\setminus I_{m+1})\,.
\end{equation}
We can write now ($m=1,\dots, k-2$) using \eqref{Imm1}:
$$
\int_{I_m}\tilde w^2 w^{-1} dx =\sum_{j=m}^{k-2} \int_{I_j\setminus I_{j+1}} \tilde w^2 w^{-1} dx + \int_{I_{k-1}} \tilde w^2 w^{-1} dx\le (C+1)p^2 w(I_m) +  \int_{I_{k-1}} \tilde w^2 w^{-1} dx\,.
$$

But on $I_{k-1}$ we have $\tilde w =\omega$  (see \eqref{tilde}) and so
\begin{equation}
\label{endp1}
\int_{I_{k-1}} \tilde w^2 w^{-1} dx \le C\omega^2 \frac{(1+5\eps)p}{9\eps\omega}|I_{k-1}|\le C\omega|I_{k-1}| p^2\,.
\end{equation}
On the other hand, as $m\le k-2$, we have by virtue of Lemma \ref{av}
\begin{equation}
\label{endp2}
w(I_m) \ge \omega |I_m| \ge \omega |I_{k-1}|\,.
\end{equation}

Combining \eqref{endp1} and \eqref{endp2} we get (for $m\le k-2$)
$$
\int_{I_{k-1}} \tilde w^2 w^{-1} dx \le C p^2 w(I_m)\,.
$$

We finally get that for $m=1,\dots, k-2$ the following holds
\begin{equation}
\label{Ik2}
\int_{I_m}\tilde w^2 w^{-1} dx \le Cp^2 w(I_m)\,.
\end{equation}

\end{proof}

In the next lemma we stop working with $\tilde w$.

\begin{lemma}
\label{wnIk1}
Let $I=[0,1]$ and $I_0= I, I_1= I_0^{++}, \dots, I_{k-1}= I_{k-2}^{++}$ as in  Section \ref{spI}. Let $w=w_n$.
Then with a finite absolute constant $C$ we have 
$$
\int_{I_{k-1}} (M^d (\chi_{I_{k-1}}w))^2 w^{-1}\, dx \le Cp^2 w(I_{k-1})\,.
$$
\end{lemma}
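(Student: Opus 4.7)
The plan is to exploit the extremely simple structure of $w=w_n$ inside $I_{k-1}$. Reading off \eqref{wn}, none of the recursive pieces $w_{n-1}(3\om,\sigma/3,I_m^{+-})$ is supported in $I_{k-1}$ (their supports $I_m^{+-}$ for $m=0,\dots,k-2$ are all disjoint from $I_{k-1}$), so the only terms of $w_n$ living in $I_{k-1}$ are $(\om/p)\chi_{I_{k-1}^-}$ and $(\om\tau/p)\chi_{I_{k-1}^+}$. Thus $w$ is piecewise constant on $I_{k-1}$, equal to $\om/p$ on its left half and $\om\tau/p$ on its right half; correspondingly $w^{-1}$ takes the values $p/\om$ and $p/(\om\tau)$ there.

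Next I would compute $M^d(\chi_{I_{k-1}} w)$ pointwise on $I_{k-1}$. For any dyadic $J\subseteq I_{k-1}^-$ the average of $w$ is $\om/p$; for $J\subseteq I_{k-1}^+$ it is $\om\tau/p$; for $J=I_{k-1}$ it equals $\La w\Ra_{I_{k-1}}=\om(1+\tau)/(2p)$; and for any dyadic $Q\supsetneq I_{k-1}$ containing $x\in I_{k-1}$ the average of $\chi_{I_{k-1}}w$ is at most $\La w\Ra_{I_{k-1}}$. Since $0<\tau<1$, these comparisons give $M^d(\chi_{I_{k-1}}w)\equiv \om/p$ on $I_{k-1}^-$ and $M^d(\chi_{I_{k-1}}w)\equiv \om(1+\tau)/(2p)$ on $I_{k-1}^+$.

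Plugging these explicit values in, together with the formulas for $w^{-1}$ above, one computes
\[
\int_{I_{k-1}}\bigl(M^d(\chi_{I_{k-1}}w)\bigr)^2 w^{-1}\,dx=\frac{\om\,|I_{k-1}|}{2p}\left(1+\frac{(1+\tau)^2}{4\tau}\right),
\]
while $w(I_{k-1})=\om(1+\tau)|I_{k-1}|/(2p)$. Using $\tau\asymp\eps$ from \eqref{3} and $p\asymp 1/\eps$ from \eqref{p}, the bracket is $\asymp 1/\tau\asymp p$, so the left side is of order $\om |I_{k-1}|$ whereas $p^2 w(I_{k-1})$ is of order $p\,\om|I_{k-1}|$. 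The required bound $\int\le Cp^2 w(I_{k-1})$ therefore holds with a lot of room to spare (the actual ratio is $\asymp 1/p$).

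I do not expect any real obstacle: once one observes that the recursion \eqref{wn} does not enter $I_{k-1}$, everything reduces to the pointwise identification of $M^d(\chi_{I_{k-1}}w)$ on each half and the short arithmetic above. The only thing to double-check is the comparison of $\La w\Ra_{I_{k-1}}$ with the subinterval averages $\om/p$ and $\om\tau/p$, which is trivial since $0<\tau<1$.
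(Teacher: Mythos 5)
Your proposal is correct and follows essentially the same route as the paper: both exploit that the recursion in \eqref{wn} never enters $I_{k-1}$, so $w$ there is just $\om/p$ on $I_{k-1}^-$ and $\om\tau/p$ on $I_{k-1}^+$, and both then compare the integral with $w(I_{k-1})\ge\frac{\om}{2p}|I_{k-1}|$; the paper simply uses the crude uniform bounds $M^d(\chi_{I_{k-1}}w)\le\om/p$ and $w^{-1}\le p/(\om\tau)$ instead of your exact pointwise evaluation. Your sharper computation reproduces the paper's intermediate conclusion that the integral is in fact $\le Cp\,w(I_{k-1})$, i.e.\ the stated $Cp^2$ bound holds with a factor $p$ to spare.
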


\begin{proof}
Clearly, by construction of $w$, we have
$$
M^d (\chi_{I_{k-1}}w) \le \frac{\omega}{p}\,.
$$
Hence,
$$
\int_{I_{k-1}} (M^d (\chi_{I_{k-1}}w))^2 w^{-1}\, dx \le \frac{\omega^2}{p^2}\frac{(1+5\eps)p}{9\eps\omega}|I_{k-1}| \le C\omega |I_{k-1}|\,.
$$
On the other hand,
$$
w(I_{k-1}) \ge \frac12\frac{\omega}{p} |I_{k-1}|\,.
$$
Therefore,
$$
\int_{I_{k-1}} (M^d (\chi_{I_{k-1}}w))^2 w^{-1}\, dx \le C p w(I_{k-1})\,.
$$

\end{proof}

\begin{lemma}
\label{wnIk1hat}
Let $I=[0,1]$ and $I_0= I, I_1= I_0^{++}, \dots, I_{k-1}= I_{k-2}^{++}$ as in  Section \ref{spI}. Let $J$ be the dyadic father of $I_{k-1}$, and $w=w_n$.
Then with a finite absolute constant $C$ we have 
$$
\int_{J} (M^d (\chi_{J}w))^2 w^{-1}\, dx \le Cp^2 w(J)\,.
$$
\end{lemma}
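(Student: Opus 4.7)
The plan is to split $J$ into its two dyadic children and handle the two pieces of $M^d(\chi_{J}w)$ separately. Since $J=I_{k-2}^{+}$, the two children are $J^{-}=I_{k-2}^{+-}$, a level-one forming interval carrying the sub-construction $w_{n-1}(3\om,\sigma/3,J^{-})$, and $J^{+}=I_{k-1}$. For any $x\in J^{\pm}$ and any dyadic $K\ni x$ with $K\cap J\neq\emptyset$, either $K\subseteq J^{\pm}$ or $K\supseteq J$, the latter giving $\langle\chi_{J}w\rangle_{K}\le\langle w\rangle_{J}$. Consequently,
\begin{equation*}
M^d(\chi_{J}w)(x)\le M^d(\chi_{J^{\pm}}w)(x)+\langle w\rangle_{J}\qquad\text{for }x\in J^{\pm},
\end{equation*}
so that, squaring and integrating against $w^{-1}$,
\begin{equation*}
\int_{J}(M^d(\chi_{J}w))^{2}w^{-1}\,dx\le 2\!\!\int_{J^{-}}\!(M^d(\chi_{J^{-}}w))^{2}w^{-1}\,dx+2\!\!\int_{J^{+}}\!(M^d(\chi_{J^{+}}w))^{2}w^{-1}\,dx+2\langle w\rangle_{J}^{2}\,w^{-1}(J).
\end{equation*}

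For the $J^{+}=I_{k-1}$ piece I will simply cite Lemma \ref{wnIk1}. For the $J^{-}$ piece I will use Lemma \ref{maj} to write $M^d(\chi_{J^{-}}w)\le M^d w\le C\tilde w$ on $J^{-}$, and then invoke a scale-invariant form of Lemma \ref{wnI0} applied to the sub-construction $w_{n-1}(3\om,\sigma/3,J^{-})$; the replacement $\om\mapsto 3\om$ in the base weight exactly absorbs the shift by one iteration level, so the restriction of the ambient $\tilde w$ from \eqref{tilde} to $J^{-}$ coincides identically with the local $\tilde w$ associated to that sub-construction. This yields $\int_{J^{-}}\tilde w^{2}w^{-1}\,dx\le Cp^{2}w(J^{-})\le Cp^{2}w(J)$. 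For the tail $\langle w\rangle_{J}^{2}\,w^{-1}(J)$, rather than appealing to the bound $[w]_{A_{2}^{d}}\le Cp^{2}$ (whose proof in Lemma \ref{Qp2} invokes the very Theorem \ref{Md} we are proving), I will compute the two averages directly from Lemma \ref{av}: on $J^{-}$ they equal $3\om$ and $\sigma/3$, and on $J^{+}$ the weight takes only the two values $\om/p$ and $\om\tau/p$, so $\langle w\rangle_{J^{+}}\asymp \om/p$ and $\langle w^{-1}\rangle_{J^{+}}\asymp p/(\om\tau)\asymp p^{2}/\om$. Averaging and using $\om\sigma=p$ gives $\langle w\rangle_{J}\langle w^{-1}\rangle_{J}\asymp p^{2}$, hence $\langle w\rangle_{J}^{2}\,w^{-1}(J)=\langle w\rangle_{J}\cdot\langle w\rangle_{J}\langle w^{-1}\rangle_{J}\cdot|J|\le Cp^{2}w(J)$.

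The only step that calls for any real argument is the scale-invariance claim for the $J^{-}$ piece: Lemma \ref{wnI0} is stated only for the canonical ambient interval $I_{0}=[0,1]$ with $n$ full iterations, whereas on $J^{-}$ we have $n-1$ iterations and a renormalized base weight $3\om$. The saving feature is that its proof uses only the recursive identities of Lemma \ref{av} and the combinatorial description \eqref{A} of the unions $A_{\ell}$ of special intervals, both of which are preserved under the renormalization $(\om,\sigma,n)\mapsto(3\om,\sigma/3,n-1)$; the critical product $\om\sigma=p$ is untouched, so the same constant $C$ and the same $p$ govern the bound, and the proof of Lemma \ref{wnI0} transcribes verbatim onto $J^{-}$.
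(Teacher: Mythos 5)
Your proof is correct and follows essentially the same route as the paper: split $J$ into $J^{-}=I_{k-2}^{+-}$ and $J^{+}=I_{k-1}$, treat $J^{-}$ via the majorant $\tilde w$ and the scale-invariant application of Lemma \ref{wnI0} to the sub-construction $w_{n-1}(3\om,\sigma/3,J^{-})$, and treat the piece on $I_{k-1}$ by elementary bounds. The only (harmless) difference is bookkeeping: the paper absorbs the intervals containing $J$ into the crude pointwise bound $M^d(\chi_J w)\le C\om$ on $J^{+}$, whereas you isolate them as the tail $\langle w\rangle_J^2\, w^{-1}(J)$ and check it directly from Lemma \ref{av} (thereby also avoiding any circular appeal to Lemma \ref{Qp2}) and cite Lemma \ref{wnIk1} for the localized part on $J^{+}$.
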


\begin{proof}
Interval $J$ consists of $J_-$ and $J_+ = I_{k-1}$. We have by Lemma \ref{av} and  by $w\le \frac{\omega}{p}$ on $I_{k-1}$ (see \eqref{wn}):
$$
x\in J_+=I_{k-1}\Rightarrow M^d (\chi_{J}w) \le \omega\,.
$$
And we also know by Lemma \ref{av} that $w(J) \ge w(J_-) =\frac12 \omega |J|$.
So
$$
\int_{J_+}(M^d (\chi_{J}w) )^2 w^{-1} dx \le \omega^2 \frac{(1+5\eps)p}{9\eps\omega} |J| \le C p^2 w(J)\,.
$$
Now notice that we can estimate the  integral  $\int_{J_-}(M^d (w\chi_{J}) )^2 w^{-1} dx$ by Lemma \ref{wnI0}. In fact, interval $J_-$ plays the role of $I$, and weight $w$ on $J_-$ (and so $\tilde w$) is constructed exactly as  $w=w_n$ on $I$, only it starts not with $\omega$ but with $3\omega$ and takes $n-1$ steps to be constructed. By scale invariance, we get by using Lemma \ref{wnI0}
$$
\int_{J_-} \tilde w^2 w^{-1} dx \le C p^2w(J_-) \le Cp^2 w(J)\,.
$$
Combining two last display inequalities we get the lemma's claim.

\end{proof}

Now we combine the lemmas of this Section with the fact that  for $x\in \supp w_{n-\ell+1}\setminus \supp w_{n-\ell}$, $w(x) \le \omega\frac{3^{\ell-1}}{p}$ to see that 
we obtained the following theorem.

\begin{theorem}
\label{sloi1}
Let $I=[0,1]$, $w=w_n$.  Let $J$ be any dyadic subinterval of $I$ such that it is not  contained in any dyadic interval
forming $\supp w_{n-1}$. There exists a finite absolute constant $C$ such that
\begin{equation}
\label{sloi1eq}
\int_J (M^d(\chi_J w))^2 w^{-1} dx \le C p^2 w(J)\,.
\end{equation}
\end{theorem}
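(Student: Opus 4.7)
The plan is to collect the lemmas proved above case-by-case, according to the position of $J$ relative to the ``forming intervals'' $\{I_m^{+-}\}_{m=0}^{k-2}$ of $\supp w_{n-1}$. A brief inspection of the dyadic tree of subintervals of $I$ shows that every dyadic $J\subseteq I$ avoiding containment in any $I_m^{+-}$ falls into exactly one of three families: the \emph{spine} $J\in\{I_0,I_1,\dots,I_{k-1}\}$; the \emph{right halves} $J\in\{I_0^+,I_1^+,\dots,I_{k-2}^+\}$; or \emph{plateau subintervals}, i.e., dyadic $J$ contained in $I_m^-$ for some $m\le k-2$, or in $I_{k-1}^-$, or in $I_{k-1}^+$. (One checks that $I_{k-1}$ is disjoint from every $I_m^{+-}$ since $I_{k-1}=I_{k-2}^{++}$ lies in the right child of $I_{k-2}^+$ while $I_{k-2}^{+-}$ lies in the left child.)

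I treat each family in turn. For plateau subintervals the weight $w$ is constant on $J$, so $M^d(\chi_J w)=w$ on $J$ and the testing integral reduces to $w(J)\le p^2 w(J)$. For the spine I invoke Lemma~\ref{maj} together with Lemma~\ref{wnI0} when $J=I_0$, with Lemma~\ref{wnIm} when $J=I_m$ for $1\le m\le k-2$, and with Lemma~\ref{wnIk1} for the direct bound when $J=I_{k-1}$. For the right halves, the boundary case $J=I_{k-2}^+$ is precisely Lemma~\ref{wnIk1hat}, and the remaining cases $J=I_m^+$ with $m\le k-3$ require a short additional argument: I split $J=I_m^{+-}\cup I_{m+1}$ and bound each piece separately. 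On $I_m^{+-}$, the weight $w_n$ coincides with $w_{n-1}(3\om,\sigma/3,I_m^{+-})$; since $(3\om)(\sigma/3)=\om\sigma=p$ is preserved and the global $\tilde w$ defined in \eqref{tilde} restricts to the analogous object built from this local weight, the scale-invariance of Lemma~\ref{wnI0} yields $\int_{I_m^{+-}}\tilde w^2 w^{-1}\le Cp^2 w(I_m^{+-})$. On $I_{m+1}$, with $m+1\in\{1,\dots,k-2\}$, Lemma~\ref{wnIm} gives $\int_{I_{m+1}}\tilde w^2 w^{-1}\le Cp^2 w(I_{m+1})$. Adding the two pieces and invoking $M^d(\chi_J w)\le M^d w\le C\tilde w$ from Lemma~\ref{maj} closes this last case.

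The main subtlety is the scale-invariance check for the sub-case $J=I_m^+$ with $m\le k-3$: one must verify that $\tilde w$ built globally for $w_n$ on $I$ agrees pointwise on $I_m^{+-}$ with the $\tilde w$ one would build from $w_{n-1}(3\om,\sigma/3,I_m^{+-})$, so that Lemma~\ref{wnI0} may be applied in situ without any stray factor of $3$; a shift $\ell\mapsto\ell-1$ of the layer index is precisely compensated by the shift $\om\mapsto 3\om$ of the initial parameter. A second point worth keeping in mind is why the bypass of $\tilde w$ is necessary for the cases touching $I_{k-1}$: on $I_{k-1}$ the ratio $\int\tilde w^2 w^{-1}/w(I_{k-1})$ is actually of order $p^3$ rather than $p^2$, because of the very small weight $w\asymp\om\tau/p$ on the special interval $I_{k-1}^+$ with $\tau\asymp\eps$; this is precisely why Lemmas~\ref{wnIk1} and \ref{wnIk1hat} must use the direct bound $M^d(\chi_J w)\le\om$ rather than the $\tilde w$-majorization.
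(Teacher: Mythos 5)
Your proposal is correct and follows essentially the paper's route: the paper proves the theorem by simply combining Lemmas \ref{maj}, \ref{wnI0}, \ref{wnIm}, \ref{wnIk1}, \ref{wnIk1hat}, and your case analysis (spine, right halves, plateau intervals, with the scale-invariance step for $I_m^{+-}$ exactly as in the proofs of Lemmas \ref{wnIm} and \ref{wnIk1hat}) is precisely the omitted bookkeeping. Your closing observation that the $\tilde w$-majorization would only give order $p^3$ on $I_{k-1}$, forcing the direct bounds of Lemmas \ref{wnIk1} and \ref{wnIk1hat}, is also accurate.
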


Now we just notice that if interval $J$ is  inside or equal to one of the intervals forming $\supp w_{n-1}$, say $K$, then we can just notice that
$K$ plays the role of $I=[0,1]$ and $w_{n-1}$ is just the same type of weight as $w_n$, only starting with $3\omega$ instead of $\omega$, but the value of $\omega$ was immaterial in the above considerations.. Therefore, we can extend  the claim \eqref{sloi1eq} of Theorem \ref{sloi1} to dyadic intervals that are not contained in any dyadic interval
forming $\supp w_{n-2}$. The constant $C$ is exactly the same. We can continue to reason this way, and we obtain \eqref{MT1}.

\bigskip

\section{Direct square function operator on characteristic functions}
\label{chfun}

The reader can see that we always tried to approach the estimate of weighted square function operator norm via going 
to adjoint operator and estimating the adjoint operator on characteristic functions. A natural question arises why not to
try to prove the estimate \eqref{L2infty} by applying $S_{w^{-1}}$ to characteristic functions directly?

The answer is that to prove \eqref{L2infty} by this method would be impossible. This is because of

\begin{theorem}
\label{chfunThm}
\begin{equation}
\label{Lch}
\|S_{{w^{-1}}}\chi_I\|_{L^{2,\infty}(w)} \le C \sqrt{[w]_{A_2^d}}\, \|\chi_I\|_{w^{-1}},\,\, \forall I\in \cD\,.
\end{equation}
\end{theorem}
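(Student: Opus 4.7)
The plan is to split the square function into its contributions from dyadic intervals inside and strictly containing $I$:
\[
\bigl(S(w^{-1}\chi_I)\bigr)^2 = \sum_{J\subseteq I}(w^{-1},h_J)^2\frac{\chi_J}{|J|} + \sum_{J\supsetneq I}(w^{-1}\chi_I,h_J)^2\frac{\chi_J}{|J|} =: (S_{\text{in}})^2+(S_{\text{out}})^2,
\]
the $J$ disjoint from $I$ contributing nothing. Because $L^{2,\infty}(w)$ is a quasi-norm, it suffices to bound each of $\|S_{\text{in}}(w^{-1}\chi_I)\|_{L^{2,\infty}(w)}$ and $\|S_{\text{out}}(w^{-1}\chi_I)\|_{L^{2,\infty}(w)}$ separately by a constant multiple of $\sqrt{[w]_{A_2^d}}\,\|\chi_I\|_{w^{-1}}$.

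For $S_{\text{out}}$ I would exploit that $h_J$ is constant on $I$ whenever $J\supsetneq I$, so $(w^{-1}\chi_I,h_J)=\pm|J|^{-1/2}w^{-1}(I)$. Summing the resulting geometric series over the chain of dyadic ancestors of $I$ containing a point $x$ gives the pointwise bound
\[
S_{\text{out}}(w^{-1}\chi_I)(x) \;\le\; C\,\frac{w^{-1}(I)}{|J_0(x)|} \;\le\; C\, M^d(w^{-1}\chi_I)(x),
\]
where $J_0(x)$ is the smallest dyadic interval containing $I\cup\{x\}$. A short Calder\'on--Zygmund stopping-time argument then controls $\|M^d(w^{-1}\chi_I)\|_{L^{2,\infty}(w)}$: every maximal dyadic $Q$ with $\langle w^{-1}\chi_I\rangle_Q>\lambda$ is either $\subseteq I$ or $\supseteq I$, and in either case the $A_2^d$ inequality $\langle w\rangle_Q\langle w^{-1}\rangle_Q\le [w]_{A_2^d}$ yields $w(Q)\le [w]_{A_2^d}\lambda^{-2}\,w^{-1}(Q\cap I)$; summing over the disjoint $Q$'s delivers the required weak bound $\sqrt{[w]_{A_2^d}}\,\sqrt{w^{-1}(I)}$.

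For $S_{\text{in}}(w^{-1}\chi_I)$, which is supported on $I$, I would pass from the weak to the strong $L^2(w)$ norm and aim at the Carleson-type estimate
\[
\|S_{\text{in}}(w^{-1}\chi_I)\|_{L^2(w)}^2 \;=\; \sum_{J\subseteq I}(w^{-1},h_J)^2\langle w\rangle_J \;\le\; C\,[w]_{A_2^d}\, w^{-1}(I).
\]
This is the heart of the argument. A Bellman-function approach suggests itself on the $A_2^d$ strip $\Omega_Q=\{(x,y)>0:1\le xy\le Q\}$ with $Q=[w]_{A_2^d}$, tracking $x=\langle w^{-1}\rangle_J$ and $y=\langle w\rangle_J$: one seeks $B\ge 0$ with $B(x,y)\le CQx$ whose concavity defect at each dyadic splitting dominates $(x_--x_+)^2 y/4$, and iteration down the dyadic tree starting from $I$ reads off the inequality. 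A slicker route is to apply directly the sharp $L^2(w)$ bound $\|S\|_{L^2(w)\to L^2(w)}\lesssim\sqrt{[w]_{A_2^d}}$ of Hukovic--Treil--Volberg to $f=w^{-1}\chi_I$, noting that $\|f\|_{L^2(w)}^2 = w^{-1}(I)$, and using the trivial embedding $L^2(w)\hookrightarrow L^{2,\infty}(w)$.

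The hardest step is the inside Carleson inequality: the size constraint $B(x,y)\le CQx$ must be compatible with concavity across the unbounded $A_2^d$ strip, and reconciling these two requirements is where routine Bellman candidates (linear in $x$, polynomial in $xy$, or involving $\log(xy)$) tend to fail. Once the two pieces are in hand, the quasi-triangle inequality $\|a+b\|_{L^{2,\infty}(w)}\le 2(\|a\|_{L^{2,\infty}(w)}+\|b\|_{L^{2,\infty}(w)})$ combines them into the final bound $\|S_{w^{-1}}\chi_I\|_{L^{2,\infty}(w)}\le C\sqrt{[w]_{A_2^d}}\,\|\chi_I\|_{w^{-1}}$.
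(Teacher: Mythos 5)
Your handling of the ``outside'' part (the intervals $J\supsetneq I$, bounded pointwise by $M^d(w^{-1}\chi_I)$ and then by the standard level-set argument using the $A_2^d$ condition) is correct, but it is the easy half; the paper does not even need to dwell on it, since its statement is really about the local sum $\sum_{J\in \cD(I)}$. The decisive step of your proposal fails: for the inside part you pass from the weak norm to the strong norm and claim the testing/Carleson estimate $\sum_{J\subseteq I}(w^{-1},h_J)^2\langle w\rangle_J\le C[w]_{A_2^d}\,w^{-1}(I)$. This inequality is false, and it is refuted by the weights constructed in this very paper: by \eqref{Sbelow} (equivalently Theorem \ref{L21}, part (3)) there are weights with $[w]_{A_2^d}\to\infty$ and $\int_{I_0}\bigl(S(w\chi_{I_0})\bigr)^2w^{-1}\,dx\ge c\,[w]_{A_2^d}\log [w]_{A_2^d}\; w(I_0)$; exchanging the roles of $w$ and $w^{-1}$ (which leaves $[w]_{A_2^d}$ unchanged) shows that no constant $C$ can work in your inside estimate. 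For the same reason your ``slicker route'' does not exist: the Hukovic--Treil--Volberg bound for the dyadic square function on $L^2(w)$ is \emph{linear} in $[w]_{A_2}$, and a bound $\lesssim [w]_{A_2}^{1/2}$ would contradict Theorem \ref{SM} / Theorem \ref{L21}(2), which give $\|S\|_{L^2(w^{-1})\to L^2(w^{-1})}\ge c\sqrt{[w]_{A_2^d}\log [w]_{A_2^d}}$. This is precisely the point of the theorem you are proving: on characteristic functions the strong $L^2$ norm can be as large as $\sqrt{Q\log Q}$ while the weak $L^{2,\infty}(w)$ norm stays $\lesssim\sqrt{Q}$, so the weak norm cannot be given away at the outset.

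The paper's proof keeps the level $\lambda$ as an extra Bellman variable and works with the distribution function itself: it defines $B_Q(u,v,\la)$ in \eqref{supTest} as the supremum of $\frac1{|J|}\,w\{x\in J:\sum_{I\in \cD(J)}|\Delta_I w^{-1}|^2\chi_I(x)>\la\}$ over $A_2^d$ weights with data $\langle w\rangle_J=u$, $\langle w^{-1}\rangle_J=v$, records its main inequality (a concavity with a drift in $\la$), homogeneity, and an obstacle condition $B=u$ when $\la\lesssim v^2$, and then exhibits an explicit supersolution $\cB(u,v,\la)=\la^{-1/2}\Theta(u\sqrt\la,\,v/\sqrt\la)$ with $\Theta(\gamma,\tau)=\min\bigl(\gamma,\,Q e^{-\tau^2/2}\int_0^\tau e^{s^2/2}\,ds\bigr)$, verifying the finite-difference main inequality by a Barthe--Maurey-type stochastic argument. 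The growth bound $\cB\le A\,Q\,v/\la$ then yields $\la\,w\{S^2_{w^{-1}}\chi_I>\la\}\le A\,Q\,w^{-1}(I)$, which is \eqref{Lch}. Your two-variable Bellman sketch with size condition $B(x,y)\le CQx$ cannot be repaired, because the strong-norm inequality it would encode is simply not true; the third variable $\la$, together with the obstacle condition, is exactly what captures the weak-type gain.
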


The proof takes the rest of this section. We introduce the following function of $3$ real variables
\begin{equation}
\label{supTest}
B_Q(u, v, \la):= \sup \frac{1}{|J|} w\left\{x\in J: \sum_{I\in D(J)} |\Delta_I w^{-1}|^2 \chi_I(x) >\lambda\right\} \,,
\end{equation}
where the supremum is taken over all $w\in A_2, [w]_{A_2}\le Q$, such that 
$$
\La w\Ra_J =u, \La w^{-1}\Ra_J = v\,.
$$
Notice that by scaling argument our function does not depend on $J$ but depends on $Q=[w]_{A_2}$. For brevity we can skip $Q$: $B:= B_Q$.

\begin{remark}
Ideally we want to find the formula for this function. Notice that this is similar to solving a problem 
of ``isoperimetric" type, where the solution of certain non-linear PDE is a common tool, see e. g. \cite{BarthH}, \cite{PIVO11}, \cite{INV}, \cite{IV}.
\end{remark}

\medskip

\subsubsection{Properties of $B$ and the main inequality}
\label{prop-weakT}
 Notice several properties of $B$:
\begin{itemize}
\item
$B$ is defined in  $\Omega:=\{(u,v, \la): 1\le uv \le Q, u>0, v>0, 0\le \la<\infty\}$.
\item If $P=(u,v, \la), P_+=(u_+,v_+, \la_+), P_-=(u_-,v_-,\la_-)$ belong to $\Omega$, and
$u=\frac12(u_+ +u_-)$, $v=\frac12(v_+ +v_-)$, $\la= \min(\la_+, \la_-)$,, then {\bf the main inequality} holds with constant $c=1$:
$$
B\left(u, v, \la+c (v_+ - v_-)^2\right)-\frac{B(P_+)+ B(P_-)}2 \ge 0.
$$
\item $B$ is decreasing in $\la$.
\item Homogeneity: $B(ut, v/t, \la/ t^2) = tB(u,v, \la), t>0$.
\item Obstacle condition: for all points $(u, v, \lambda)$ such that $10\le uv \le Q,\, \la\ge 0$, if $\la \le \delta \,v^2$ for a positive absolute constant $\delta$, one has $B(u, v, \la)= u$.
\item The boundary condition $B(u, v, \la) = 0$ if $uv=1$.
\end{itemize}

All these properties are very simple consequences of the definition of $B$. However, 
let us explain a bit the second and the fifth bullet. 
The second bullet is the consequence of the scale invariance of $B$. 
We consider data $P_+$ and find weight $w_+$ that almost supremizes $B(P_+)$.
By definition of $B$, we  have it on $J$. But by scale invariance we can think that $w_+$ lives on $J_+$. 
Then we consider data $P_-$ and find weight $w_-$ that almost supremizes $B(P_-)$.
Again we are supposed to have it on $J$. But by scale invariance we can think that $w_-$ lives on $J_-$. The next step is to consider the concatenation  of $w_+$ and $w_-$:
$$
w_c:=\begin{cases} w_+,\,\, \text{on} \,\, J_+\\
w_-,\,\, \text{on} \,\, J_-\,.\end{cases}
$$
Clearly this new weight is a competitor for giving the supremum for date $P$ on $J$. But it is only a competitor, the real supremum in \eqref{supTest} is bigger. This implies the second bullet above ({\it the main inequality}).

\bigskip

Now let us explain the fifth bullet above, we call it {\it the obstacle condition}. 
Let us consider a special weight $w_s$ in $J$: it is one constant on $J_-$ and just another constant on $J_+$. Moreover, we wish to have
$\La w_s^{-1}\Ra_{J_+} = 4 \La w_s^{-1}\Ra_{J_-}$. Notice that then $b\La w_s^{-1}\Ra_J \le |\Delta_J w_s^{-1}|$ 
with some positive absolute constant $b$.  

Now it is obvious that if $\lambda\le \delta^2\La w_s^{-1}\Ra_J^2$ 
then $\{x\in J: S^2_{w_s^{-1}} (\chi_J) \ge\lambda\} =J$ and so 
$\frac1{|J|} w_s\{x\in J: S^2_{w_s^{-1}} (\chi_J) \ge\lambda\} =\La w_s\Ra_J$.  
Notice now that $w_s$ is just one admissible weight, and that we have to 
take supremum over all such admissible weights. 

We get the fifth bullet above (=the obstacle condition):
$B(u, v, \la)=u$ for those points $(u, v, \la)$ in the domain of definition of $B$, 
where the corresponding $w_s$ with $\La w_s\Ra =u, \La w_s^{-1}\Ra_J= v$ exists. 
It is obvious that for all sufficiently large $Q$ and for any pair $(u,v)$ such that $10\le uv\le Q$ one can construct a just ``two-valued" $w_s$ as above with
$[w_s]_{A_2} \le Q$ (we recall that we deal only with dyadic $A_2$ weights).

Notice that the main inequality above transforms into a {\bf partial differential inequality} if considered infinitesimally (and if we tacitly assume that $B$ is smooth):
\begin{equation}
\label{infMI}
-\frac12 d^2_{u,v}B +c\frac{\partial B}{\partial \la} (dv)^2 \ge 0\,.
\end{equation}
We get it with $c=1$ for the function $B$ defined above  (if $B$ happens to be smooth).

We are not going to find $B$ defined in \eqref{supTest}, but instead we will construct smooth
$\cB$ that satisfies all the properties above (and of course \eqref{infMI}) except for the boundary condition 
(the last bullet above). It will satisfy even a slightly stronger properties, for example, the obstacle condition (the fifth bullet) will be satisfied with $1$ instead of $10$:
\begin{equation}
\label{obstTest}
\begin{aligned}
& \forall (u, v, \lambda)\,\text{ such that}\, 1\le uv \le Q,\, \la\ge 0, 
\\
& \text{if}\,\, \la \le \delta \,v^2\,\,\text{ for some}\, \delta>0, \,\text{then}\,\, B(u, v, \la)= u\,.
\end{aligned}
\end{equation}
Here $a$ will be some positive absolute constant (it will not depend on $Q$).

\bigskip

Using our usual telescopic sums consideration it will be very easy to prove the following

\begin{theorem}
\label{t-weakT}
Suppose we have a smooth function $\cB$ satisfying all the conditions above except the boundary condition, 
but satisfying the obstacle condition in the form \eqref{obstTest}. We also allow $c$ to be a small positive constant (say, $c=\frac18$). And suppose it also satisfies
\begin{equation}
\label{gr-weakT}
\cB(u, v, \la) \le A\,Q\frac{v}{\la}\,.
\end{equation}
Then \eqref{Lch} will be proved.
\end{theorem}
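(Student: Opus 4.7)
The plan is to show that $\cB$ serves as a supersolution bounding the correct distribution function, namely that for every dyadic $J$ and every $w\in A_2^d$ with $[w]_{A_2^d}\le Q$,
\[
\frac{1}{|J|}\,w\Bigl\{x\in J:\sum_{K\in\cD(J)}|\Delta_K w^{-1}|^2\chi_K(x)>\lambda\Bigr\}\le \cB\bigl(\La w\Ra_J,\La w^{-1}\Ra_J,\lambda\bigr).
\]
Applied at the top interval $J=I$ together with the growth bound \eqref{gr-weakT}, this gives $t^2\,w\{x\in I:S(w^{-1}\chi_I)(x)>t\}\le AQ\,\|\chi_I\|^2_{w^{-1}}$ after substituting $\lambda=t^2$, which upon taking supremum in $t$ and square roots is exactly \eqref{Lch}.

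To produce the supersolution estimate I would iterate the main inequality down the dyadic tree below $I$. For a dyadic $J'$ with children $J'_\pm$ and $v_{J'_\pm}:=\La w^{-1}\Ra_{J'_\pm}$, setting $\lambda_\pm:=\mu_{J'}-c(v_{J'_+}-v_{J'_-})^2$ makes $\min(\lambda_+,\lambda_-)+c(v_+-v_-)^2=\mu_{J'}$, and the main inequality gives
\[
\cB(u_{J'},v_{J'},\mu_{J'})\ge \tfrac12\bigl[\cB(u_{J'_+},v_{J'_+},\mu_{J'_+})+\cB(u_{J'_-},v_{J'_-},\mu_{J'_-})\bigr],
\]
with the propagation rule $\mu_{J_\pm}:=\mu_J-c(v_{J_+}-v_{J_-})^2$ and initialization $\mu_I:=\lambda$. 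Multiplying by $|J'|$ and iterating over the dyadic tree produces, for every dyadic partition $\mathcal{P}$ of $I$,
\[
|I|\,\cB(u_I,v_I,\lambda)\ge \sum_{J\in\mathcal{P}}|J|\,\cB(u_J,v_J,\mu_J).
\]
I then take $\mathcal{P}=\mathcal{F}$, the family of maximal dyadic $J\subseteq I$ with $\mu_J\le \delta v_J^2$; on such $J$ the obstacle \eqref{obstTest} forces $\cB(u_J,v_J,\mu_J)=u_J$, so the right-hand side collapses to $w\bigl(\bigcup\mathcal{F}\bigr)$.

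It remains to identify $\bigcup\mathcal{F}$ with a superset of the super-level set of $S^2(w^{-1}\chi_I)$. Unwinding the recursion gives $\mu_J=\lambda-c\sum_{K:J\subsetneq K\subseteq I}(v_{K_+}-v_{K_-})^2$, and since $(v_{K_+}-v_{K_-})^2=4|\Delta_K w^{-1}|^2$ by the Haar identity, Lebesgue differentiation shows that along $J\ni x$, $|J|\to 0$,
\[
\mu_J\longrightarrow \lambda-4c\,S^2(w^{-1}\chi_I)(x),\qquad v_J\longrightarrow w^{-1}(x)>0 \quad \text{a.e.}
\]
Therefore whenever $S^2(w^{-1}\chi_I)(x)>\lambda/(4c)$ we have $\mu_J<0\le \delta v_J^2$ eventually, so $x\in\bigcup\mathcal{F}$ up to a null set. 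Combining with the growth bound $\cB(u_I,v_I,\lambda)\le AQ\,v_I/\lambda$,
\[
w\{x\in I:S^2(w^{-1}\chi_I)(x)>\lambda/(4c)\}\le |I|\,\cB(u_I,v_I,\lambda)\le AQ\,\frac{w^{-1}(I)}{\lambda},
\]
and rescaling the threshold closes the proof via the reduction in the first paragraph.

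The main technical point is the finite-depth truncation underlying the telescoping: one should iterate only up to some dyadic depth $N$, use that $\cB\ge 0$ (natural, since $\cB$ models a non-negative quantity) to discard the contribution of deep non-stopped leaves, and then send $N\to\infty$, invoking the a.e.\ stopping argument above to ensure the non-stopped region is Lebesgue-negligible. Everything else—the Haar computation that matches $(v_+-v_-)^2$ with $|\Delta_K w^{-1}|^2$, the single invocation of the obstacle, and the final rescaling of the threshold—is purely algebraic and follows mechanically from the postulated properties of $\cB$.
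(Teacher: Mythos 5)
Your proposal is correct and follows essentially the same route as the paper: iterate the main inequality down the dyadic tree with the running parameter $\la$ decremented by $c(\La w^{-1}\Ra_{J_+}-\La w^{-1}\Ra_{J_-})^2$, stop when the obstacle condition \eqref{obstTest} takes over (which covers the level set of $\sum_{K}|\Delta_K w^{-1}|^2\chi_K$), and close with the growth bound \eqref{gr-weakT} at the top interval. The only differences are bookkeeping ones the paper also treats briskly — your truncation at depth $N$ with $\cB\ge 0$ replaces the paper's reduction to $w$ constant on small dyadic intervals, and your explicit factor $4$ and threshold rescaling, together with the requirement $4c\le\delta$ ensuring the stopped intervals stay in the domain $\la\ge 0$, correspond to the paper's ``obvious inequality'' step.
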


\begin{proof}
It is a stopping time reasoning. It is enough to think that $w$ is constant on some very small dyadic intervals and to prove the estimate on $C_{w,T}$ uniformly. Then we start with any such $w, [w]_{A_2}\le Q$, and we use the main inequality with $u=\La w\Ra_J, v=\La w^{-1}\Ra_J$, $u_\pm=\La w\Ra_{J_\pm}, v_\pm=\La w^{-1}\Ra_{J_\pm}$, 
$$ 
\la- c (\La w^{-1}\Ra_{J_+}-\La w^{-1}\Ra_{J_-})^2=: \la_{J_\pm}
$$
to obtain
\begin{equation}
\label{main-wT}
\begin{aligned}
 |J_+| \cB(\La w\Ra_{J_+}, \La w^{-1}\Ra_{J_+}, \la_{J_+})+ |J_-| \cB(\La w\Ra_{J_-}, \La w^{-1}\Ra_{J_-}, \la_{J_-}) \le  
 \\
 \cB(\La w\Ra_J, \La w^{-1}\Ra_J, \la)|J| \le \frac{A\,Q\La w^{-1}\Ra_J}{\la}|J|
\end{aligned}
\end{equation}
We continue to use the main inequality (because $J_\pm$ are not at all different from $J$) 
and finally after large but finite number of steps, on certain  collection $\cI$ of small intervals $I=J_{\pm\pm\cdots\pm}$ we come  to the situation that
\begin{equation}
\label{la-final}
\la_{J_{\pm\pm\cdots\pm}} <c (\La w^{-1}\Ra_{J_{\pm\pm\cdots+}}-\La w^{-1}\Ra_{J_{\pm\pm\cdots {-}}})^2\,.
\end{equation}
Collection $\cI$ may be empty of course, but we know that $I\in \cI$ if 
on $I$   the following holds for $x\in I$: 
$$
c\sum_{L\in D(J), I\subset L} |\Delta_I w^{-1}|^2 \chi_L(x)>\lambda\,.
$$
Let us combine \eqref{la-final} with an obvious inequality 
$$
c (\La w^{-1}\Ra_{J_{\pm\pm\cdots+}}-\La w^{-1}\Ra_{J_{\pm\pm\cdots {-}}})^2 \le \delta\La w^{-1}\Ra^2_{J_{\pm\pm\cdots}}\,.
$$
 At this moment we use the property 5 of $B$ called obstacle condition. 
 On intervals $I\in \cI$
the obstacle condition will provide us with $\cB(\La w\Ra_{I}, \La w^{-1}\Ra_{I}, \la_{I}) =\La w\Ra_I$.
So on a certain large  finite step $N$  we get from the iteration of \eqref{main-wT} $N$ times the following estimate
$$
\sum_{I\in D_N(J): I\in \cI} |I| \La w\Ra_I \le \frac{A\,Q\La w^{-1}\Ra_J}{\la}|J|\,.
$$
Therefore, we proved
$$
\frac1{|J|} w\{x\in J: \sum_{I\in D(J)} |\Delta_I w^{-1}|^2 \chi_I(x) >\lambda\} \le A\, Q \frac{\La w^{-1}\Ra_J}{\la}\,,
$$
which is \eqref{Lch}.

\end{proof}

\medskip

\subsubsection{Formula for the function $\cB$.\,  Monge--Amp\`ere equation with a drift.}
\label{formula-weakT}

Here is the formula for $\cB$ that satisfies all the properties in \ref{prop-weakT} (except for the last one, the boundary condition):
\begin{equation}
\label{form-weakT}
\begin{aligned}
& \cB(u, v,\la) =  \frac1{\sqrt\la} \Theta(u\sqrt\la, \frac{v}{\sqrt\la})\,,
\\
&\text{where}\,\,\, \Theta(\gamma, \tau):= \min \left(\gamma, Q e^{-\tau^2/2}\int_0^\tau e^{s^2/2} ds\right)\,.
\end{aligned}
\end{equation}

Notice that the fact that $\cB$ has the form $\cB(u, v, \la) = \frac1{\sqrt\la} \Theta(u\sqrt\la, \frac{v}{\sqrt\la})$ is trivial, this follows from property 4 called homogeneity.

Notice also that function $\Theta$ is  given in the domain enclosed by two hyperbolas
$$
H:=\{(\gamma, \tau)>0: 1\le \gamma \tau \le Q\}\,.
$$

All properties  listed at the beginning of Section \ref{prop-weakT} (except for the sixth bullet, which is boundary condition, but we do not use it anywhere) follow by direct computation.
In the next section we explain how to get this formula.

\subsubsection{Explanation of how to find such a function $\Theta$.}
\label{expl-weakT}

\bigskip

The main inequality (with $c=\frac18$) in terms of $\Theta$ becomes a ``drift concavity condition":
\begin{equation}
\label{Theta-MI}
\begin{aligned}
& \frac1{\sqrt{1+\frac{(\Delta\tau)^2}{8}}}\Theta \bigg( \sqrt{1+\frac{(\Delta\tau)^2}{8}} \frac{\gamma_-+\gamma_+}{2}, 
\frac1{\sqrt{1+\frac{(\Delta\tau)^2}{8}}} \frac{\tau_-+\tau_+}{2}\bigg)\ge 
\\
& \frac{\Theta(\gamma_-, \tau_-) + \Theta(\gamma_+, \tau_+)}{2}\,,
\end{aligned}
\end{equation}
where $(\gamma_-, \tau_-), (\gamma_+, \tau_+) \in H$, $0<\tau_-<\tau_+, \Delta \tau:=\tau_+-\tau_-$.

Assuming that $\Theta$ is smooth (we will find a smooth function), the infinitesimal version appears,  it is a sort of Monge--Amp\`ere relationship with a drift. Namely, the following matrix relationship must hold
\begin{equation}
\label{negative}
\begin{bmatrix}
\Theta_{\gamma\gamma}, & \Theta_{\gamma\tau}\\
\Theta_{\gamma\tau}, & \Theta_{\tau\tau} +\Theta + \tau\Theta_\tau -\gamma\Theta_\gamma
\end{bmatrix} \le 0\,.
\end{equation}
A direct calculation shows that this property is equivalent to the following one.
On any  curve  $\gamma =\phi(\tau)$ lying in the domain $H$ 
\begin{equation}
\label{change-var}
\begin{aligned}
&\text{and such that }\,\, \phi'' + \tau\phi' +\phi=0 
\\
& \text{we have}\, \, (\Theta(\phi(\tau), \tau))'' + \tau  (\Theta(\phi(\tau), \tau))' + \Theta(\phi(\tau), \tau)\le 0\,.
\end{aligned}
\end{equation}

This hints at a possibility to have  a change of variables $(\gamma, \tau)\to (\Gamma, T)$ such that
condition \eqref{negative} transforms to a simple concavity. To some extent this is what happens. Namely, notice the following simple
\begin{lemma}
\label{l-change-var}
Consider the following change of variable: $T=\int_0^\tau e^{s^2/2} ds$.  Then $\phi''(\tau)+\tau \phi'(\tau) +\phi(\tau)\le 0$ if and only if  $(e^{\tau^2/2} \phi(\tau))_{TT} \le 0$ and  $\phi''(\tau)+\tau \phi'(\tau) +\phi(\tau)= 0$ if and only if  $(e^{\tau^2/2} \phi(\tau))_{TT} = 0$.
\end{lemma}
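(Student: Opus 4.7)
The plan is to reduce both statements to a single explicit identity obtained by chain-rule computation. Set $\Psi(\tau):=e^{\tau^2/2}\phi(\tau)$ and treat $\Psi$ as a function of $T$ through $T=\int_0^\tau e^{s^2/2}\,ds$. From this definition $dT/d\tau=e^{\tau^2/2}$, so $d\tau/dT=e^{-\tau^2/2}$. The goal is to compute $\Psi_{TT}$ and observe that, up to a strictly positive factor, it equals $\phi''(\tau)+\tau\phi'(\tau)+\phi(\tau)$; once this identity is in hand, both the inequality and the equality versions of the lemma follow immediately.

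First I would compute $\Psi_T$. Differentiating $\Psi$ in $\tau$ gives
\[
\Psi'(\tau)=\bigl(e^{\tau^2/2}\bigr)'\phi+e^{\tau^2/2}\phi'=e^{\tau^2/2}\bigl(\phi'(\tau)+\tau\phi(\tau)\bigr),
\]
and then multiplying by $d\tau/dT=e^{-\tau^2/2}$ the exponential factors cancel cleanly:
\[
\Psi_T=\phi'(\tau)+\tau\phi(\tau).
\]
This cancellation is the small but satisfying step that makes the whole calculation work; it is exactly the reason the change of variable $T$ is chosen with integrand $e^{s^2/2}$.

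Next I would differentiate once more. Since $\Psi_T$, as a function of $\tau$, has derivative $\phi''+\phi+\tau\phi'$, another application of the chain rule yields
\[
\Psi_{TT}=\bigl(\phi''(\tau)+\tau\phi'(\tau)+\phi(\tau)\bigr)\cdot e^{-\tau^2/2}.
\]
Because $e^{-\tau^2/2}>0$ for every real $\tau$, the sign of $\Psi_{TT}$ coincides pointwise with the sign of $\phi''+\tau\phi'+\phi$. In particular $\Psi_{TT}\le 0$ is equivalent to $\phi''+\tau\phi'+\phi\le 0$, and the equality case is the same equivalence with $\le$ replaced by $=$. This completes both directions of the lemma at once.

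There is essentially no serious obstacle: the statement is a chain-rule identity dressed up by a well-chosen integrating factor. The only thing to be careful about is the direction of the substitution — one must remember to express $\partial_T=e^{-\tau^2/2}\partial_\tau$ so that the $e^{\tau^2/2}$ introduced by differentiating $\Psi$ in $\tau$ is precisely cancelled at each step. Once this bookkeeping is performed the two factors combine to give a strictly positive multiplier, which is what allows the equivalence to be stated without any sign restriction on $\phi$.
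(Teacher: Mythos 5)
Your computation is correct and is exactly the "direct differentiation" the paper invokes: using $\partial_T=e^{-\tau^2/2}\partial_\tau$ one finds $\bigl(e^{\tau^2/2}\phi\bigr)_{TT}=e^{-\tau^2/2}\bigl(\phi''+\tau\phi'+\phi\bigr)$, and the strictly positive factor $e^{-\tau^2/2}$ gives both the inequality and the equality equivalences. This matches the paper's (one-line) proof, so there is nothing to add.
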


\begin{proof}
The proof is a direct differentiation.
\end{proof}

This Lemma hints that the right change of variable should look like

\begin{equation}
\label{GaT}
\begin{cases} \Gamma:= \gamma e^{\tau^2/2},\\ T= \int_0^\tau e^{s^2/2} ds\,,\, (\gamma, \tau)\in \bR_+^1\times \bR_+^1\end{cases}
\end{equation}
then in the new coordinates the family of  curves $\gamma =\phi(\tau)$ such that $\phi'' + \tau\phi' +\phi=0 $ becomes a family of all  straight lines $\Gamma = CT +D$. (Notice that both families depend on two arbitrary constants.)

Denote
$$
O:= \{(\Gamma, T): (\gamma, \tau)\in G\}.
$$

The condition $ (\Theta(\phi(\tau), \tau))'' +\tau  (\Theta(\phi(\tau), \tau))' + \Theta(\phi(\tau), \tau)\le 0$ on any of these curves  becomes
\begin{equation}
\label{TT}
\left(e^{\tau^2/2}\Theta(\phi(\tau), \tau)\right)_{TT} \le 0\,, \,\, \Gamma=CT+D\,,\,\, (\gamma, \tau)\in G\,.
\end{equation}
which is the concavity of $e^{\tau^2/2}\Theta (\gamma, \tau)$  in a new coordinate $T$ along the line $\Gamma = CT +D$.
Let us rewrite two functions in the new coordinates:
$$
\Phi(\Gamma, T):= \Theta(\gamma, \tau)\,,\,\, U(T) := e^{\tau^2/2}\,.
$$
Then \eqref{TT} transforms into
\begin{equation}
\label{concUPhi}
\forall C, D \in \bR\,,\,\,\left(U(T)\Phi(CT+D, T))\right)_{TT} \le 0\,, \,\, (\Gamma, T)\in O\,.
\end{equation}
This is just a concavity of $U(T) \Phi(\Gamma, T)$ on $O$ of course. Notice that neither $H$ nor $O$ are convex, so we  should understand \eqref{concUPhi} as a local concavity in $O$: just the negativity of its second differential form 
$$
d^2_{\Gamma, T} (U(T)\Phi(\Gamma, T)\le 0\,,\,\, (\Gamma, T)\in O\,.
$$

\medskip

So we reduce the question to finding a concave function in new coordinates. Now we choose a simplest possible concave function:
$$
U(T) \Phi(\Gamma, T) := \min (\Gamma, KT)\,,
$$
where the constant $K=K(Q)$ will be chosen momentarily.

\medskip

If we write down now $\Theta(\gamma, \tau)= \Phi(\Gamma, T)$ in the old coordinates, we get exactly function $\Theta$ from \eqref{form-weakT} (we need to define constant $K$ yet), namely,
\begin{equation}
\label{Theta5}
\Theta(\gamma, \tau):=\min \left(\gamma, K e^{-\tau^2/2}\int_0^\tau e^{s^2/2} ds\right)\,.
\end{equation}

Recall that now we can consider
\begin{equation}
\label{Btest}
B(u, v, \la)=\frac1{\sqrt{\la}} \Theta(u\sqrt{\la}, \frac{v}{\sqrt{\la}})
\end{equation}
and we are going to apply Theorem \ref{t-weakT} to it. But we need to choose $K$ to satisfy all the conditions (except the last one) at the beginning of Section \ref{prop-weakT}.

First of all it is now very easy to understand why the form of the domain $H=\{1\le \gamma \tau\le Q\}$ plays the role. In fact, by choosing
$$
K=AQ
$$
with some absolute constant $A$, we guarantee that in this domain our function $\Theta$ satisfies the obstacle condition
\begin{equation}
\label{obstacle-weakT}
\Theta(\gamma, \tau)  = \gamma\,\, \text{as soon as}\,\, \tau\ge a_0>0\,,
\end{equation}
where $a_0$ is an absolute positive constant.
In fact, for all sufficiently small $\tau$, $e^{-\tau^2/2}\int_0^\tau e^{s^2/2} ds \asymp \tau$, and therefore, 
for all sufficiently small $\tau$ (smaller than a certain absolute constant)
$$
\Theta(\gamma, \tau):=\min \left(\gamma, K \tau\right)\,.
$$
The fifth condition at the beginning of Section \ref{prop-weakT} (the obstacle condition) requires then that $\min \left(\gamma, K \tau\right)=\gamma$ if $\tau\ge a_0>0$. But on the upper hyperbola then $\gamma=Q/a_0$ for $\tau=a_0$. We  see that the smallest possible $K$ we can choose to satisfy the obstacle condition is $K\asymp Q$.

\bigskip

Secondly, function $\Theta$ satisfies the infinitesimal condition \eqref{negative} by construction. But we need to check that the main inequality \eqref{Theta-MI} is satisfied as well.

This can be done by the following lemma.

\begin{lemma}
\label{redu-weakT}
Inequality \eqref{Theta-MI} for function $\Theta$ built above holds if and only if the following inequality is satisfied for $\phi(\tau):=  e^{-\tau^2/2} \int_0^\tau e^{s^2/2} ds$:
\begin{equation}
\label{phi}
\begin{aligned}
& \frac1{\sqrt{1+\frac{(\Delta\tau)^2}{100}}}\phi \bigg(\frac{\tau_1+\tau_2}{2\sqrt{1+\frac{(\Delta\tau)^2}{100}}}\bigg) \ge 
\\
& \frac{\phi(\tau_1) +\phi(\tau_2)}{2}\,, \, \forall \,0<\tau_1\le\tau_2 \le\tau_0\,,
\end{aligned}
\end{equation} 
with some absolute positive small constant  $\tau_0$.
\end{lemma}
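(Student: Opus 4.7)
The plan is to exploit the explicit two-branch form $\Theta(\gamma,\tau) = \min(\gamma,\,K\phi(\tau))$: both sides of \eqref{Theta-MI} split into a handful of subcases depending on which argument wins each $\min$, and the lemma reduces to an elementary finite case analysis. Writing $t := \sqrt{1+(\Delta\tau)^2/8}$, one has $\Theta(\gamma_\pm,\tau_\pm)\in\{\gamma_\pm,\,K\phi(\tau_\pm)\}$, while the left-hand side of \eqref{Theta-MI} equals $\min\!\bigl(\tfrac{\gamma_-+\gamma_+}{2},\;\tfrac1t K\phi\bigl(\tfrac{\tau_-+\tau_+}{2t}\bigr)\bigr)$.

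For the forward direction, I specialize $\gamma_\pm = K\phi(\tau_\pm)$, placing both right-hand terms on the $K\phi$ branch. Then \eqref{Theta-MI} forces
$$\frac{K\,\phi\!\bigl((\tau_-+\tau_+)/(2t)\bigr)}{t}\;\ge\;K\,\frac{\phi(\tau_-)+\phi(\tau_+)}{2},$$
which, after cancelling $K$, is \eqref{phi} with $1/8$ in place of $1/100$; since $1/100<1/8$ the stated form is strictly weaker, so the forward implication is immediate.

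For the backward direction, assume \eqref{phi} and analyze three cases according to the branch of each $\Theta(\gamma_\pm,\tau_\pm)$. In the fully linear case $\gamma_\pm \le K\phi(\tau_\pm)$, the right-hand side of \eqref{Theta-MI} equals $(\gamma_-+\gamma_+)/2$, and either the left-hand min selects the linear branch (giving equality) or the $K\phi$ branch, in which case combining $\gamma_\pm \le K\phi(\tau_\pm)$ with \eqref{phi} yields the desired bound. In the dual case $\gamma_\pm\ge K\phi(\tau_\pm)$ the argument is symmetric: the $K\phi$-branch alternative on the left is covered by \eqref{phi}, while the linear alternative is immediate from $\gamma_\pm\ge K\phi(\tau_\pm)$. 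The mixed case $\gamma_-\le K\phi(\tau_-),\,\gamma_+\ge K\phi(\tau_+)$ reduces to the fully linear case by replacing $\gamma_+$ with $K\phi(\tau_+)$: this leaves $\Theta(\gamma_+,\tau_+)$ (and hence the right-hand side) unchanged while only decreasing the left-hand side.

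The restriction $\tau_1,\tau_2\le\tau_0$ in \eqref{phi} is dictated by the obstacle condition \eqref{obstacle-weakT}: once $\tau\ge a_0$, $\Theta\equiv\gamma$, and \eqref{Theta-MI} collapses to a trivial identity on the linear branch (one checks that $(\tau_-+\tau_+)/(2t)$ remains above $a_0$ since $t\ge 1$). The main obstacle is the bookkeeping of the constants $1/8$ versus $1/100$: the raw case analysis produces \eqref{phi} with $1/8$, and the weaker form with $1/100$ stated in the lemma provides the slack needed to absorb the constants lost in the mixed-case reduction and in the interchange between the rescaling factor $t$ and the shift of the argument of $\phi$. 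The justification for the interchangeability of these particular absolute constants is the asymptotic $\phi(\tau) = \tau + O(\tau^3)$ as $\tau \to 0^+$, which shows that small multiplicative perturbations of $t$ induce commensurate perturbations of order $(\Delta\tau)^2$ on both sides of \eqref{phi}.
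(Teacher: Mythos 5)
Your reduction through the two--branch form $\Theta=\min(\gamma,K\phi(\tau))$ is in substance the paper's own (one--sentence) proof that the main inequality commutes with taking minima, and the forward specialization and the mixed--case reduction are fine in spirit. However, the backward direction --- the only one the paper actually uses --- has two genuine gaps. First, the constants run the wrong way in your argument. Since $\phi\ge 0$ is increasing on $[0,\tau_0]$, the map $t\mapsto \tfrac1t\phi(s/t)$ is decreasing for $t\ge 1$, so the version of \eqref{phi} with $(\Delta\tau)^2/8$ is \emph{stronger} than the stated one with $(\Delta\tau)^2/100$. Hence your forward direction is correct, but your backward direction needs the $1/8$--version on the $K\phi$ branch, and your closing claim that the weaker $1/100$ form ``provides the slack needed to absorb the constants'' inverts the implication: assuming less cannot yield more, and the appeal to $\phi(\tau)=\tau+O(\tau^3)$ does not repair this. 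The honest fix, consistent with how the lemma is used, is to prove \eqref{Theta-MI} with $(\Delta\tau)^2/100$ in place of $(\Delta\tau)^2/8$ (Theorem \ref{t-weakT} explicitly allows the constant $c$ to be any fixed small number), i.e.\ to state the equivalence with matching constants.

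Second, the range of $\tau$ is not controlled. Inequality \eqref{phi} is assumed only for $\tau_1\le\tau_2\le\tau_0$, whereas in \eqref{Theta-MI} the points $(\gamma_\pm,\tau_\pm)$ range over all of $H$, and your fully linear case with $\gamma_\pm\le K\phi(\tau_\pm)$ genuinely needs an estimate there. Your parenthetical check is backwards: since $t\ge 1$, the midpoint argument $(\tau_-+\tau_+)/(2t)$ is \emph{decreased}, and for large $\Delta\tau$ it can fall well below $a_0$, so the rescaled parent need not sit on the linear branch, and even when it does not, one still must verify $\tfrac{K}{t}\phi\bigl(\tfrac{\tau_-+\tau_+}{2t}\bigr)\ge\tfrac{\gamma_-+\gamma_+}{2}$. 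What saves this regime is not \eqref{phi} but the constraint that the parent datum lies in $\Omega$ (so $\tfrac{\gamma_-+\gamma_+}{2}\cdot\tfrac{\tau_-+\tau_+}{2}\le Q$) combined with $K=AQ$ for a large absolute $A$ and the two--sided behaviour $\phi(x)\asymp\min(x,1/x)$; with these one checks the displayed inequality whenever $\tau_+\gtrsim\tau_0$, while \eqref{phi} handles $\tau_\pm\le\tau_0$. Without some such argument your case analysis does not close; to be fair, the paper's own proof is a single sentence and is silent on both points as well.
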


\begin{proof}
Lemma is easy, because we can immediately see that the main inequality \eqref{Theta-MI} commutes with the operation of minimum. 
\end{proof}

 As soon as   \eqref{phi} is checked, inequality \eqref{Lch} and Theorem \ref{chfunThm} are proved. There are many ways  to prove  \eqref{phi}, we choose the proof that imitates (with some changes) the proof of Barthe and Maurey of the similar statement, see \cite{BM}.

\begin{proof}

Consider the
new function
$$
U(p, q):= \frac1q\phi(\frac{p}q)\,, 
$$
given in the domain $\{(p, q): p\ge 0, 0\le \frac{p}q \le \tau_0\}$. Here $\tau_0$ is a small positive number, say $\tau_0=0.001$.
Then \eqref{phi} follows from
\begin{equation}
\label{U}
U(p, q) \ge \frac12 U(p+a, \sqrt{q^2-\frac{a^2}{100}}) + \frac12 U(p-a, \sqrt{q^2-\frac{a^2}{100}})\,.
\end{equation}
Notice that 
\begin{equation}
\label{apq}
a\le p \le \frac1{1000} q,
\end{equation}
and so
\begin{equation}
\label{09}
\sqrt{q^2-\frac{a^2}{50}}/\sqrt{q^2-\frac{a^2}{100}} \le 0.9\,.
\end{equation}
Notice also that infinitesimally \eqref{phi} and \eqref{U} are satisfied, this is very easy to see because $\phi$ is strictly concave on small interval $[0, \tau_0]$.

Without loss of generality assume $a\geq 0$. Consider the process 
\begin{align*}
X_{t} = U(p+B_{t}, \sqrt{q^{2}- \frac{t}{50}}),  \quad 0\le  t\leq q^2/4.
\end{align*}

Here $B_{t}$ is the standard Brownian motion starting at zero. 
The infinitesimal version of \eqref{U} shows that
$$
\frac12 U_{pp} -\frac1{100} \frac{U_q}{q} \le0\,.
$$
It follows from Ito's formula  together with the last observation that $X_{t}$ is a supermartingale. 
Let  $\tau$ be the stopping time
\begin{align*}
\tau = \frac{q}{2}\wedge\inf\{ t \geq 0 : B_{t} \notin (-a,a)\}.
\end{align*} 

It follows from the fact that $X_t$ is a supermartingale that  
\begin{align*}
&U(p,q) = X_{0} \geq \mathbb{E} X_{\tau}=\mathbb{E} U(p+B_{\tau}, \sqrt{q^{2}+\tau/50}) = \\
&P(B_{\tau}=-a) \mathbb{E}(U(p-a, \sqrt{q^{2}-\tau/50}) | B_{\tau}=-a)+
P(B_{\tau}=a) \mathbb{E}(U(p+a, \sqrt{q^{2}+\tau/50}) | B_{\tau}=a)+\\
&P(|B_{\tau}|<a, \tau=q) \mathbb{E}(U(p+a, \sqrt{q^{2}+\tau/50}) | B_{\tau}=a)\,.
\end{align*}

Notice that the last probability is very small. In fact,
$$
\frac14q^2P(|B_{\tau}|\le a, \tau\ge \frac{q}{2}) \le {\mathbb E} \tau \le \mathbb{E} |B_\tau|^2 \le a^2\le \frac1{1000} q^2\,.
$$
Also clearly $P(B_{\tau}=-a) = P(B_{\tau}=-a) $, and by the last observation this probabilities are at least $\frac1{2.2}$. So we obtained
\begin{align*}
&U(p,q) \ge \\
&\frac{1}{2.2}\left(\mathbb{E}(U(p-a, \sqrt{q^{2}-\tau/50}) | B_{\tau}=-a)+
\mathbb{E}(U(p+a, \sqrt{q^{2}-\tau/50}) | B_{\tau}=a) \right) \geq \\
&\frac{1}{2.2}\left(U\left( p-a, \sqrt{q^{2}- \mathbb{E}(\tau/50|B_{\tau}=-a) }\right)+U\left( p+a, \sqrt{q^{2}- \mathbb{E}(\tau/50|B_{\tau}=a) }\right) \right)=\\
&\frac{1}{2.2}\left( U\left(p-a, \sqrt{q^{2}-a^{2}/50}\right) +  U\left(p+a, \sqrt{q^{2}-a^{2}/50}\right) \right)\,.
\end{align*}

Notice that  we have used   $\mathbb{E}(\tau | B_{\tau}=a) =\mathbb{E}(\tau | B_{\tau}=-a)=a^{2}$, and the fact that the map $t \mapsto U(p, \sqrt{t})$, $t\in [q^2/4, q^2]$,  is convex together with Jensen's inequality.   The convexity follows from the fact that
$$
t\to \frac1{\sqrt{t}} \phi(\frac{p}{\sqrt{t}})
$$ 
is convex if $t \in [q^2/4, q^2]$. This is easy to check by direct calculation, putting $x=\frac{p}{\sqrt{t}}$ we get
$$
\bigg[\frac1{\sqrt{t}} \phi(\frac{p}{\sqrt{t}})\bigg]''_{tt} = \frac34\frac1{t^{5/2}}[(1-2x^2-x^2(1-x^2))\phi(x) + 2x -x^3] >0,
$$
if $x=\frac{p}{\sqrt{t}}\le 2\frac{p}q \le 2\tau_0$ is sufficiently small.

Now we use \eqref{09} to conclude that
\begin{align*}
&U(p,q) \ge  \frac{1}{2.2}\left( U\left(p-a, \sqrt{q^{2}-a^{2}/50}\right) +  U\left(p+a, \sqrt{q^{2}-a^{2}/50}\right) \right) \ge \\
&1.1 \frac{1}{2.2}\left( U\left(p-a, \sqrt{q^{2}-a^{2}/100}\right) +  U\left(p+a, \sqrt{q^{2}-a^{2}/100}\right) \right)\,,
\end{align*}
where we obtained the last inequality by denoting $x_2= \frac{p\pm a}{\sqrt{q^2-\frac{a^2}{50}}}$, $x_1= \frac{p\pm a}{\sqrt{q^2-\frac{a^2}{100}}}$, and noticing that
$$
\phi(x_2)/\phi(x_1) \ge 1\ge 0.99 \ge 1.1 \times 0.9 \ge 1.1 x_1/x_2 =  1.1 \sqrt{q^2-\frac{a^2}{50}}/ \sqrt{q^2-\frac{a^2}{100}}\,.
$$
Here we use \eqref{apq} again.

\end{proof}


\begin{thebibliography}{999}

\bibitem{BarthH} {\sc F. Barthe, N.~Huet}, {\em On Gaussian Brunn--Minkowskii inequalities.} Stud. Math. {\bf 191}, (2009), pp. 283--304.

\bibitem{BM} {\sc F.~Barthe, B.~Maurey}, {\em Some remarks on isoperimetry of Gaussian type}, Annales de l'Institut Henri Poincare (B) Probability and Statistics, Vol. 36, Iss. 4, pp. 419--434.

\bibitem{Buck}{\sc St. Buckley}, 
{\em Summation condition on weights}, Mich. Math. J., {\bf 40} (1993), 153--170.

\bibitem{Buck1}{\sc St. Buckley}, {\em Estimates for operator norms on weighted spaces and reverse Jensen inequalities}.
Trans. Amer. Math. Soc. 340 (1993), no. 1, 253--272.

\bibitem{DSaLaRey}. C. Domingo-Salazar; M. Lacey; G. Rey,  \emph{Borderline weak-type estimates for singular integrals and square functions}. Bull. Lond. Math. Soc. 48 (2016), no. 1, 63--73. 

\bibitem{IV} P.~Ivanisvili, A.~Volberg, \emph{Isoperimetric functional inequalities via the maximum principle: the exterior differential systems approach}, arXiv: 1511.06895.

\bibitem{PIVO11} P.~Ivanisvili, A.~Volberg,  \emph{Improving Beckner's bound via Hermite functions},  Anal. PDE 10 (2017), no. 4, 929--942.

\bibitem{IVV1} P. Ivanisvili, F. Nazarov, A. Volberg,  \emph{Strong weighted and restricted weak weighted  estimates of the square function revisited} Preprint, 2018, pp. 1--18.

\bibitem{IVV2} P. Ivanisvili,  A. Volberg,  \emph{
Martingale transform and Square function: some end-point weak weighted estimates}, arXiv:1711.10578 .


\bibitem{INV} P. Ivanisvili, F. Nazarov, A. Volberg, \emph{Hamming cube and martingales}, C. R. Acad. Sci. Paris, Ser. I 355 (2017) 1072--1076.


\bibitem{INV2} P. Ivanisvili, F. Nazarov, A. Volberg, \emph{Square function and the Hamming: duality}, arXiv:1706.01930, pp.1--13.






 \bibitem{LNO} {\sc. A. Lerner, F. Nazarov, S. Ombrosi}, 
{\em On the sharp upper bound related to the weak Muckenhoupt-Wheeden conjecture}, arXiv:1710.07700, pp. 1--17.

\bibitem{LOP1}
A.K. Lerner, S. Ombrosi and C. P\'erez,
{\it Sharp $A_1$ bounds for Calder\'on-Zygmund operators and the relationship with a problem of Muckenhoupt and Wheeden}, Int. Math. Res. Not. IMRN  2008,  no. 6, Art. ID rnm161, 11.

\bibitem{LOP2}
A.K. Lerner, S. Ombrosi and C. P\'erez, {\it $A_1$ bounds for Calder\'on-Zygmund operators related to a problem of Muckenhoupt and Wheeden}, Math. Res. Lett. {\bf 16} (2009),  no. 1, 149--156.

\bibitem{M}
K. Moen, {\it Sharp one-weight and two-weight bounds for maximal
operators}, Studia Math. {\bf 194} (2009), no. 2, 163--180.

\bibitem{NRVV1}
F. Nazarov, A. Reznikov, V. Vasyunin and A. Volberg,
{\it A Bellman function counterexample to the $A_1$ conjecture: the blow-up of the weak norm estimates of weighted singular operators},
preprint (2015). Available at https://arxiv.org/abs/1506.04710.

\bibitem{NRVV2}
F. Nazarov, A. Reznikov, V. Vasyunin and A. Volberg,
{\it On weak weighted estimates of martingale transform}, preprint (2016). Available at https://arxiv.org/abs/1612.03958.

\bibitem{NTV}
F. Nazarov, S. Treil and A. Volberg, {\it The Bellman functions and two-weight inequalities for Haar multipliers}, J. Amer. Math. Soc. {\bf 12} (1999), no. 4, 909--928.


\bibitem{OsW} {\sc A. Osekowski}, {\em Weighted square function inequalities}, Preprint,  pp. 1--15.



\bibitem{R}
M.C. Reguera, {\it On Muckenhoupt-Wheeden conjecture}, Adv. Math., {\bf 227} (2011), no. 4, 1436--1450.

\bibitem{RT}
M.C. Reguera and C. Thiele, {\it The Hilbert transform does not map $L^1(Mw)$ to $L^{1,\infty}(w)$}, Math. Res. Lett. {\bf 19} (2012),  no. 1, 1--7.


\bibitem{W} M. Wilson, \emph{Littlewood--Paley Theory and Exponential Integrability}, Springer, 2006, pp. 1--196.




\end{thebibliography}
\end{document}